\documentclass[12pt]{amsart}

\usepackage{geometry,enumitem}
\usepackage{hyperref}

\usepackage{color}
\usepackage{accents}

\usepackage{amsthm, amsxtra}

\numberwithin{equation}{section}

\newtheorem{theorem}{Theorem}[section]
\newtheorem{theorem*}{Theorem}
\newtheorem{remark}[theorem]{Remark}
\newtheorem{lemma}[theorem]{Lemma}

\newtheorem{proposition}[theorem]{Proposition}
\newtheorem{corollary}[theorem]{Corollary}

\newtheorem*{question*}{Question}

\newtheorem{example}[theorem]{Example}

\newtheorem{definition}[theorem]{Definition}


%

\newcommand{\R}{\mathbb{R}}

%
%


\author{Young-Heon Kim, Brendan Pass, and David J.\ Schneider}

\address{Young-Heon Kim}
\address{Department of Mathematics\\ University of British Columbia\\ Vancouver, V6T 1Z2 Canada  }
\email{yhkim@math.ubc.ca}

\address{Brendan Pass}
\address{Department of Mathematical and Statistical Sciences\\632 CAB\\ University of Alberta\\
Edmonton, Alberta, Canada, T6G 2G1}
\email{pass@ualberta.ca}

\address{David J.\ Schneider}
\address{Global Institute for Food Security\\ University of Saskatchewan\\ 110 Gymnasium Place\\ Saskatoon, SK S7N 4J8}
\email{dave.schneider@gifs.ca}

\title[Optimal transport and barycenters for dendritic measures]{Optimal transport and barycenters for dendritic measures}

\thanks{The first two authors are partially supported by the Natural
  Sciences and Engineering Research Council of Canada (NSERC) through
  the Discovery Grant program. The third author is supported by the
  Canada Excellence in Research Chair in Food Systems and Security and
  the Global Institute for Food Security.\\
 The authors gratefully acknowledge the support of the Pacific Institute
of Mathematical Sciences (PIMS); in particular, they would like to thank PIMS director James
Colliander for initiating this project by bringing the three of them
together. Part of this work was done while
the authors participated in a research-in-teams program at the Banff
International Research Station (BIRS), and we thank BIRS for the excellent
working environment provided there.  \copyright 2019 by the authors.  }

\date{\today}

\begin{document}

\begin{abstract}
We introduce and study a variant of the Wasserstein distance on the
space of probability measures, specially designed to deal with measures
whose support has a dendritic, or treelike structure with a particular
direction of orientation.  Our motivation is the comparison of and
interpolation between plants' root systems.  We characterize
barycenters with respect to this metric, and establish that the
interpolations of root-like measures, using this new metric, are also
root like, in a certain sense; this property fails for conventional
Wasserstein barycenters.  We also establish geodesic convexity with
respect to this metric for a variety of functionals, some of which we
expect to have biological importance.
\end{abstract}

\maketitle

\section{Introduction}

In this paper we introduce a new metric on the space of probability
measures, the layerwise-Wasserstein distance. The motivation for this work is the need for a sound mathematical
framework for describing the structure and diversity of dendritic
structures in anisotropic environments.  In particular, we are
interested in the macroscopic structure of plant root systems
developing under the influence of gravity and the stratification of chemical constituents, texture and microbial activity characteristic of soils.  This biophysical context can be readily translated into mathematical terms.  Plant tissues are composed of cells that physically partition $\mathbb{R}^3$ into two connected components -- the ``inside'' and ``outside''.  The resulting structure roughly corresponds to a CW complex (see e.g., \cite{Hatcher2002}) describing the topology
of the plant.  Ignoring complex features present at microscopic
scales, the external surface can be viewed as a smooth, connected 2D manifold with genus zero embedded in $\mathbb{R}^3$.  Computational representations of these external surface can be reconstructed using
standard methods of optical, X-ray and neutron tomography.

This idealization misses two essential points: a) the above and below ground portions of plants display intricate structural forms that are remarkably resistant to quantitative analysis, and; b) the form and function of these complicated structures are intimately related to the anisotropic environment in which they develop.  The first condition implies the need to handle arbitrarily complicated distributions of mass in space subject to very modest restrictions on the behaviour of the surface while the second suggests the need to handle preferred
directions in space.

Natural challenges include quantifying the difference between two or more roots, summarizing or describing the typical structure of a family of root systems (for instance, the roots of several genetically identical plants, grown in nearly identical environments, which often exhibit considerable variation in their structure) in a succinct way, quantifying the variation within that family and comparing the structure exhibited by one family to another.  A typical approach to these problems is to compute a family of \textit{phenotypes} for each system (including, for example, total root length, rooting depth, and various topological invariants, such as the Horton-Strahler index) and compare and average among them (see, for instance,
\cite{bio1,bio2,bio3,bio4,bio5}).  Though this has met with some
success in distinguishing between particular choices of root systems, it is not generally clear which phenotypes are most useful for this purpose, and the choice in different applications is often done in an adhoc way. For virtually any collection of phenotypes, it is not hard to come up with drastically different root shapes sharing the same phenotypes.

Our approach focuses on roots as mass distributions in $\mathbb{R}^3$ where the vertical and horizontal directions have distinct roles (roots which are related by a rotation about the vertical axis are considered identical). Natural mathematical goals include constructing a metric between root shapes reflecting both their downward pointing dendritic topology as well as the distances and sizes in the underlying space\footnote{Ideally, the metric should detect geometric differences, between, for instance, a short limb and a long one, as well as topological differences, between say, a forked limb and a straight one.}, and producing a representative of a family of root systems which capture the average, or typical structure
among the family. After normalization for overall mass, root systems can be modeled as probability distributions; 
the Wasserstein distance from optimal transport \cite{Villani03,Villani09, Santambrogio15} is then one candidate for such a metric, and Wasserstein barycenters (Fr\'echet means with respect to this metric, see \cite{AguehCarlier11}) a corresponding candidate for a representative of a family.  While this metric has proved fruitful in related problems involving comparing and averaging among shapes (image processing, for instance), we demonstrate in this paper that it is not ideally adapted to the downward dendritic structure prominent among root systems, in large part because optimal matchings don't generally exhibit monotonicity in the distinguished, vertical direction.  While it is possible to incorporate vertical stratification 
 in the usual definition of Wasserstein distance by penalizing transport in the vertical direction, the practical application of this formalism is limited by computational requirements.  We propose a simple alternative based on a related metric, the layerwise -Wasserstein metric, derived from a variant of optimal transport  in which monotonicity in the distinguished vertical direction is guaranteed; see Definition \ref{def:layer-Wass}.
 The metric barycenter arising from this new metric is a natural candidate for a representative of a family of root systems. Furthermore, we suspect this distance may play a role in other applied problems featuring both tree-like and geometric structures (blood vessels in biology, river systems in topography, etc.).  Our primary present goal is to  develop the mathematical properties of the layerwise-Wasserstein distance and its interpolants, while the biological and methodological applications will be developed in subsequent work. However, we keep the motivating applications in mind as we go, and focus on properties of root systems that have potential biological relevance.

It is common in biology to model root systems by their \emph{skeletons}, in which three dimensional limbs are replaced by approximating one
dimensional curves  \cite{Bucksch14};    these skeletons retain the dendritic, or
treelike, structure of the root, but strip away its thickness (which
is less crucial in some applications). As a corresponding
mathematical object we introduce {\em skeletal measures}, which are essentially
mass distributions supported on these skeletal structures;  see Section~\ref{sec:skeleton}.  This gives
a useful framework for studying the topological properties of roots
and their interpolations, while avoiding difficulties that arise when
dealing with their (more realistic) three dimensional structure.

When building interpolants to use as representatives of families of
roots, a desirable property is that the dendritic structure is
preserved: given several root systems, does their metric barycenter
look like a root?  We are able to give a fairly satisfactory
affirmative answer to this question for skeletal root systems, using
our layerwise-Wasserstein distance as the metric;  see Theorem~\ref{thm:lw-bary}.  On the other hand,
we exhibit examples illustrating that when the conventional
Wasserstein distance is used, interpolants of root systems may not
resemble root systems at all; more precisely, we show that the
Wasserstein barycenter of several skeletal roots can have high
dimensional support, so that the dendritic structure is broken; see Section~\ref{subsect: comparision}.  We also establish comparisons between the total root length (essentially one dimensional Hausdorff measure of the support) of several skeletal root measures and their layerwise-Wasserstein barycenter see Proposition \ref{prop:root-length}; this type of result is impossible in general with the Wasserstein barycenter, as the support may be more than $1$ dimensional.

Aside from being natural for certain applications, the
layerwise-Wasserstein distance also has computational advantages over
its classical Wasserstein counterpart in certain situations, as the
sorting in the distinguished direction is monotone, and so
optimization problems arise only in spaces of co-dimension $1$. In
$\mathbb{R}^2$, for instance, the layerwise-Wasserstein distance
essentially corresponds to the Knothe-Rosenblatt rearrangement
\cite{Knothe57, Rosenblatt52}, which can  be computed much more easily
than the two dimensional Wasserstein distance; however, to the best of
our knowledge, the Knothe-Rosenblatt rearrangement has not been
associated with a metric before, although it has been connected to
optimal transport in
\cite{CarlierGalichonSantambrogio09}).\footnote{Interpolating between
  two dimensional measures is in fact not merely a mathematical
  simplification or toy model, but has actual agricultural
  applications, since experiments are sometimes done growing plants
  between two panes of glass, placed very close together, resulting in
  essentially two dimensional root shapes.}   More generally, the layerwise-Wasserstein distance is a special instance of the Monge-Knothe maps recently introduced in \cite{MuzellecCuturi19}; in that work, properties of the corresponding metric, including interpolation between measures and convexity were not studied.

We also note that our layerwise-Wasserstein distance is similar in
spirit to the Radon-Wasserstein distance found in
\cite{BonneelRabinPeyrePfister2015}, as both approaches involve
disintegrating the measures and transporting their fibres. The
difference lies in how the measures are disintegrated;  we
disintegrate with respect to a distinguished, vertical variable on the
underlying space (which is natural in the applications we have in
mind), whereas the disintegration in
\cite{BonneelRabinPeyrePfister2015} is done with respect to Radon
transformed variables.

 In addition, it is worth commenting briefly on the relationship between this work and another recent series of papers relating optimal transport to plant root shapes \cite{BressanSun2018, BressanPalladinoSun2018}.  In those works, the objective is to identify and characterize root (and tree) shapes which optimize certain functionals, modeling absorbtion of nutrients and sunlight and the cost (via ramified optimal tranpsort) of returning those nutrients to the base of plant, whereas our goal is to differentiate and interpolate between various root systems.

The manuscript is organized as follows.  In Section~\ref{sec:layer},
we introduce the layerwise Wasserstein distance and barycenters, and
establish some basic properties.  Section~\ref{sec:skeleton} focuses
on skeletal measures, while Section~\ref{sec:displacementconvexity} is
devoted to layerwise displacement interpolation and convexity.

\section{Layerwise Wasserstein distance}\label{sec:layer}


Let $M(X)$, respectively $P(X)$, denote the space of  finite
Borel measures, respectively, Borel probability measures, on a metric
space $X$ equipped with the weak-* topology.  Consider $M(\R^d \times
\R_{\ge 0})$ and let $M_{ac}(\R^d \times \R_{\ge 0})$ be its subset
consisting of absolutely continuous measures (with respect to
Lebesgue).  For $\mu \in M(\R^d \times \R_{\ge 0})$, let $\mu^V$ be
its vertical marginal, defined by, $$\int_{\R_{\ge 0}} f (z) \mu^V (dz) =
\int_{\R^d \times \R_{\ge 0}} f(z) \mu (dx, dy), \forall f \in
C(\R_{\ge 0}).$$ Note that $|\mu^V| = |\mu|$,  where $|\mu|$
  denotes the total mass of $\mu$. The following vertical rescaling
of the measures in $M(\R^d \times \R_{\ge 0})$ is a key step in our
construction of the Wasserstein type distance that uses the
distinguished coordinate $\R_{\ge 0}$. Note also that measures may not
necessarily have the same mass, so we also normalize them to be
probability measures.

\begin{definition}[vertical rescaling]
  Given $\mu \in M(\R^d \times \R_{\ge 0})$, we define its  {\em vertically rescaled version}, namely, $$\tilde \mu \in P(\R^d \times [0,1]),$$ as follows:
  Let $F_\mu : \R_{\ge 0} \to [0,1]$ be the cumulative function  given by 
  \begin{align*}
    F_\mu ( y) = \frac{1}{|\mu|}\mu^V( [0,y]) .
  \end{align*}
  Note that $(F_\mu)_\# \mu^V =|\mu| \mathcal{L}^1$, and  $F_\mu$ is continuous for absolutely continuous $\mu^V$. 
  
  Then, define
  \begin{align*}
    \tilde \mu = \frac{1}{|\mu|}(id \times F_\mu)_\# \mu
  \end{align*}
  where $id: \R^d \to \R^d$ is the identity map. 
  Notice that the map $\mu \mapsto \tilde \mu$  from $M(\R^d \times \R_{\ge 0})$ to $P(\R^d \times [0,1])$  is continuous with respect to the weak* topology. In particular, this map pushes forward a given $\Omega \in P(M(\R^d \times \R_{\ge 0}))$, to its vertically rescaled version $$\tilde \Omega \in P(P(\R^d \times [0,1])).$$
\end{definition}
Note that the mapping $F_\mu$ depends on $\mu$ only through its vertical marginal, $\mu^V$; we will sometimes abuse notation and write $F_{\mu^V}$ instead of $F_\mu$.

This normalization allows us to define a Wasserstein type distance
that uses the disintegration along the vertical line.  In the
  following, $W_2^2$ denote the quadratic Wasserstein distance.
\begin{definition}[layerwise-Wasserstein distance]\label{def:layer-Wass}
  Given $\mu, \nu \in M(\R^d \times \R_{\ge 0})$, define
  \begin{align}\label{eqn:LW}
    d_{LW}^2 (\mu, \nu) = W_2^2 \left(\frac{1}{|\mu^V|} \mu^V, \frac{1}{|\nu^V|}\nu^V  \right)  + \int_{0}^{1} W_2^2 (\tilde \mu_l, \tilde \nu_l) dl
  \end{align}
  where $\tilde \mu$ and $\tilde \nu$ have disintegrations $\tilde \mu (dx, dl) = \tilde \mu_l  (dx) dl ,  \tilde \nu (dx, dl) = \tilde \nu_l  (dx) dl$ with respect to the Lebesgue measure $dl$ on $[0,1]$. 
\end{definition}

\begin{remark}
 We note that strictly speaking $d^2_{LW}$ does not give a distance on $M(\R^d \times \R_{\ge 0})$, unless restricted to   $P(\R^d \times \R_{\ge 0})$,  as different measures may have the same vertical rescaling $(\frac{1}{|\mu^V|} \mu^V, \tilde \mu$);  instead, it gives a metric on the set of equivalence classes, under the equivalence  relation $\mu \sim \nu$ if $\mu/|\mu| = \nu/|\nu|$. To get a distance on $M(\R^d \times \R_{\ge 0})$ one may add $(|\mu| -|\nu|)^2$ and consider the metric
  \begin{align*}
   W_2^2 \left(\frac{1}{|\mu^V|} \mu^V, \frac{1}{|\nu^V|}\nu^V  \right)  + \int_{0}^{1} W_2^2 (\tilde \mu_l, \tilde \nu_l) dl +(|\mu| -|\nu|)^2.
  \end{align*}
  In the following, however, we stick to \eqref{eqn:LW} for simplicity (in fact, in subsequent sections, we restrict our attention entirely to $P(\R^d \times \R_{\ge 0})$).
\end{remark}

 We now consider  the metric barycentre  corresponding to the layerwise-Wasserstein distance \eqref{eqn:LW},\footnote{Strictly speaking, given the remark above, the metric barycenter is an equivalence class of measures; we choose as a representative the unique probability measure in a given class.}	which we define below, and call them layerwise-Wasserstein barycentre.

\begin{definition}[layerwise Wasserstein barycentre]
  For $\Omega \in P(M(\R^d \times \R_{\ge 0}))$, a {\em layerwise Wasserstein barycentre} $Bar^{LW} (\Omega) \in P(\R^d \times \R_{\ge 0})$ is defined as an element of 
  \begin{align*}
    Bar^{LW} (\Omega) \in \mathop{\rm argmin}_{\mu \in P\left({\R^d \times \R_{\ge 0} }\right)}  \int_{ M(\R^d \times \R_{\ge 0})}d_{LW}^2 (\mu, \nu) d\Omega (\nu).
  \end{align*}
\end{definition}
To characterize layerwise-Wasserstein barycenters, we need a little more terminology.
Define $\tilde \Omega_l :=\Big(\nu \mapsto \tilde \nu_l \Big)_\#\Omega$.  A Wasserstein barycenter of $\tilde \Omega_l$ is then a minimizer over $P(\mathbb{R}^d)$ of
\begin{equation}\label{eqn: Wass barycenter}
\eta \mapsto \int_{P(\mathbb{R}^d)} W_2^2(\eta, \alpha)d\tilde \Omega_l(\alpha) =\int_{ M(\R^d \times \R_{\ge 0})} W_2^2(\eta, \tilde \nu_l)d\Omega(\nu).
\end{equation}
Similarly, defining $\Omega^V :=\Big(\nu \mapsto  \nu^V \Big)_\#\Omega$, a Wasserstein barycenter of $\Omega^V$ is a minimizer over $P(\mathbb{R}_{\geq 0})$ of
$$
\eta \mapsto \int_{P(\mathbb{R}_{\geq 0})} W_2^2(\eta, \alpha) d\Omega^V(\alpha) =\int_{ M(\R^d \times \R_{\ge 0})} W_2^2(\eta, \nu^V)d\Omega(\nu).
$$
We then have the following:
\begin{proposition}\label{prop:decomp}
	A measure $\mu \in P(\R^d \times \R_{\ge 0})$ is a layerwise Wasserstein barycenter of $\Omega \in P (M(\R^d \times \R_{\ge 0}))$ if and only if its vertical marginal $\mu^V$ is a Wasserstein barycenter of $\Omega^V$ and for almost every layer $l$, $\tilde \mu_l$ is a Wasserstein barycenter of $\tilde \Omega_l$.
	\end{proposition}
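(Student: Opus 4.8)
The plan is to exploit the product-like structure of $d_{LW}^2$ visible in \eqref{eqn:LW}: it is a sum of a term depending only on the vertical marginal and an integral over layers of terms depending only on the disintegrated fibers. The key point is that these two pieces can be minimized independently, because the data one is free to choose — namely a probability measure $\mu \in P(\R^d \times \R_{\ge 0})$ — decomposes correspondingly into the pair $(\mu^V, (\tilde\mu_l)_{l\in[0,1]})$, and this correspondence is essentially a bijection onto admissible pairs.

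First I would fix $\Omega \in P(M(\R^d\times\R_{\ge 0}))$ and, for $\mu \in P(\R^d\times\R_{\ge 0})$, write out the objective using \eqref{eqn:LW} and Fubini:
\begin{align*}
\int d_{LW}^2(\mu,\nu)\, d\Omega(\nu)
= \int W_2^2\!\left(\mu^V, \tfrac{1}{|\nu^V|}\nu^V\right) d\Omega(\nu)
+ \int_0^1 \left(\int W_2^2(\tilde\mu_l, \tilde\nu_l)\, d\Omega(\nu)\right) dl,
\end{align*}
where I have used that $\mu^V = \mu^V/|\mu^V|$ since $\mu$ is a probability measure. Applying the pushforward definitions of $\Omega^V$ and $\tilde\Omega_l$, the first term becomes $\int W_2^2(\mu^V,\alpha)\,d\Omega^V(\alpha)$ and the inner integral in the second term becomes $\int W_2^2(\tilde\mu_l,\alpha)\,d\tilde\Omega_l(\alpha)$; these are exactly the Wasserstein barycenter functionals in \eqref{eqn: Wass barycenter} and the displayed line after it. Thus the objective splits as $\mathcal{F}^V(\mu^V) + \int_0^1 \mathcal{F}_l(\tilde\mu_l)\, dl$, with the two summands depending on disjoint ``coordinates'' of $\mu$.

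The crux is then a measurable-selection / reconstruction argument: I must show that choosing $\mu^V$ to minimize $\mathcal{F}^V$ and, for a.e.\ $l$, choosing $\tilde\mu_l$ to minimize $\mathcal{F}_l$ is both necessary and achievable. Necessity is the easy direction: if $\mu$ is a layerwise barycenter but, say, $\tilde\mu_l$ failed to minimize $\mathcal{F}_l$ on a positive-measure set of $l$, one could replace $\tilde\mu$ by a better disintegration (keeping $\mu^V$ fixed) and reconstruct a competitor $\mu'$ via the inverse of the vertical rescaling, i.e.\ $\mu' = (id\times F_{\mu^V}^{-1})_\#\big(\int \tilde\mu'_l\,dl\big)$ appropriately interpreted, strictly lowering the objective — a contradiction; similarly for $\mu^V$. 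For sufficiency/achievability I need that (a) a minimizer of $\mathcal{F}^V$ exists (standard: $\Omega^V \in P(P(\R_{\ge 0}))$ with appropriate moment bounds, Wasserstein barycenters exist), (b) the map $l\mapsto \tilde\Omega_l$ is measurable so that $l \mapsto \inf \mathcal{F}_l$ is measurable and a measurable family of minimizers $l\mapsto \tilde\mu_l^\ast$ can be selected (measurable selection theorem), and (c) from the pair $(\mu^{V,\ast}, (\tilde\mu_l^\ast)_l)$ one can genuinely reconstruct an honest probability measure $\mu^\ast \in P(\R^d\times\R_{\ge 0})$ whose vertical rescaling recovers this pair — here I would simply set $\tilde\mu^\ast(dx,dl) = \tilde\mu_l^\ast(dx)\,dl$ and then push forward through $id \times F_{\mu^{V,\ast}}^{-1}$, using that $F_{\mu^{V,\ast}}$ is a cumulative distribution function, taking care with atoms of $\mu^{V,\ast}$ (if $\mu^{V,\ast}$ is not absolutely continuous the inverse is a quantile function and one must check the rescaling identity still holds, or restrict attention to the absolutely continuous case as the paper does elsewhere).

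I expect the main obstacle to be precisely step (c) together with the measurability in (b): verifying that the ``disintegrate–rescale'' correspondence $\mu \leftrightarrow (\mu^V, (\tilde\mu_l)_l)$ is a bijection between $P(\R^d\times\R_{\ge 0})$ (or its relevant subset) and pairs consisting of a vertical probability measure and a measurable family of fiber probability measures, and that both directions interact correctly with the formula \eqref{eqn:LW} — in particular that the integral $\int_0^1 W_2^2(\tilde\mu_l,\tilde\nu_l)\,dl$ is unaffected by the reconstruction and that measurability of $l \mapsto \tilde\mu_l$ is preserved. Once this bijection and the separation of variables are in hand, the proposition follows immediately by minimizing each summand independently.
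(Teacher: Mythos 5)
Your proposal is correct and follows essentially the same route as the paper: split the objective via Fubini and the pushforward definitions of $\Omega^V$ and $\tilde\Omega_l$ into a vertical-marginal term plus an integral over layers, then minimize the two pieces independently. The paper compresses your steps (b)--(c) into the single phrase ``by changing $\mu^V$ and $\tilde\mu_l$ independently,'' so your explicit attention to the reconstruction of $\mu$ from the pair $(\mu^V,(\tilde\mu_l)_l)$ and to measurable selection of layerwise minimizers is a more careful rendering of the same argument, not a different one.
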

\begin{proof}
By definition, a layerwise Wasserstein barycenter $\mu$ must minimize
\begin{eqnarray*}
& \int_{M(\R^d \times \R_{\ge 0})}	\left[ W_2^2 \left(\frac{1}{|\mu^V|} \mu^V, \frac{1}{|\nu^V|}\nu^V  \right)  + { \int_0^1}  W_2^2 (\tilde \mu_l, \tilde \nu_l) dl\right]d\Omega(\nu)\\
&=\int_{M(\R^d \times \R_{\ge 0})}	 W_2^2 \left(\frac{1}{|\mu^V|} \mu^V, \frac{1}{|\nu^V|}\nu^V  \right)d\Omega(\nu)  +\int_{M(\R^d \times \R_{\ge 0})} { \int_0^1} W_2^2 (\tilde \mu_l, \tilde \nu_l) dld\Omega(\nu)\\
&=\int_{M( \R_{\ge 0})}	 W_2^2 \left(\frac{1}{|\mu^V|} \mu^V, \frac{1}{|\nu^V|}\nu^V  \right)d\Omega^V(\nu^V)  + { \int_0^1} \int_{M(\R^d)}W_2^2 (\tilde \mu_l, \tilde \nu_l) d\tilde\Omega_l(\tilde \nu_l)dl.
\end{eqnarray*}
By changing  $\mu^V$ and $\tilde \mu_l$ independently, we see that   $\mu$ minimizes the last line if and only if its vertical marginal $\mu^V$ minimizes the first term and for almost every $l$, $\tilde \mu_l$ minimizes $ \int_{M(\R^d)}W_2^2 (\tilde \mu_l, \tilde \nu_l) d\tilde\Omega_l(\tilde \nu_l)$; that is, $\frac{\mu^V}{|\mu^V|}$  is a Wasserstein barycenter of $\Omega^V$ and $\tilde \mu_l$ a Wasserstein barycenter of $\tilde \Omega_l$.
\end{proof}

The proposition gives a straightforward way to construct
layerwise-Wasserstein barycenters; first construct the layers $\tilde
\mu_l = Bar^W(\tilde \Omega_l)$, as Wasserstein barycenters of the
$\tilde \Omega_l$.  Then letting $\mu^V=Bar^W (\Omega^V)$ be the
Wasserstein barycenter of $\Omega^V$ the layerwise Wasserstein
barycenter $\mu =Bar^{LW} (\Omega)$ is defined by
$$
d\mu(x,y) = d\tilde \mu_{F_{\mu^V}(y)}(x)d\mu^V(y). 
$$
Note that any $Bar^{LW} (\Omega)$ is written this way, and is uniquely determined if $\tilde \mu_l$ is uniquely determined for a.e. $l$. In particular, we have 

\begin{corollary}\label{thm:layerwise-Wass}
	For $\Omega \in P (M_{ac}(\R^d \times \R_{\ge 0}))$, there is unique $\mu=Bar^{LW} (\Omega) $. 
\end{corollary}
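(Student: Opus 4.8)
The plan is to reduce everything to Proposition~\ref{prop:decomp} and then combine standard existence results with the uniqueness theory for Wasserstein barycenters, exploiting absolute continuity of $\Omega$. By Proposition~\ref{prop:decomp} and the reconstruction formula $d\mu(x,y)=d\tilde\mu_{F_{\mu^V}(y)}(x)\,d\mu^V(y)$ recorded immediately after it, a layerwise-Wasserstein barycenter $\mu$ is, as a probability measure, completely determined by --- and can be reassembled from --- its vertical marginal $\mu^V$, which must be a Wasserstein barycenter of $\Omega^V$, together with the family of layers $l\mapsto\tilde\mu_l$, each of which must (for a.e.\ $l$) be a Wasserstein barycenter of $\tilde\Omega_l$. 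It therefore suffices to prove: (i) $\Omega^V$ has a Wasserstein barycenter on $P(\R_{\ge0})$, and for a.e.\ $l$ the measure $\tilde\Omega_l$ has one on $P(\R^d)$, with a measurable choice $l\mapsto\tilde\mu_l$; and (ii) each of these barycenters is unique. Existence in (i) is standard (\cite{AguehCarlier11}, together with its extension from finitely many measures to a general averaging measure on $P$), valid under the mild moment condition that makes the $Bar^{LW}$ problem well posed; measurability of $l\mapsto Bar^W(\tilde\Omega_l)$ follows from a measurable selection argument and is in any case automatic once uniqueness is known.

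For uniqueness, the vertical marginal and the layers are treated differently. Since $\mu^V$ lives on the interval $\R_{\ge0}\subset\R$, I would pass to quantile functions, which identify the $W_2$-space of probability measures of finite second moment on $\R_{\ge0}$ isometrically with the closed convex cone of nondecreasing functions in $L^2(0,1)$; under this identification the barycenter functional becomes a strictly convex Hilbert-space problem whose unique minimizer is the $L^2$-average of the quantile functions of $\Omega^V$ (cf.\ \cite{Santambrogio15}). Hence $Bar^W(\Omega^V)$ is always unique, requiring no absolute-continuity input. For the layers, where $P(\R^d)$ may have $d\ge2$, uniqueness of $W_2$-barycenters genuinely needs regularity: the relevant fact (\cite{AguehCarlier11}) is that the barycenter is unique as soon as the averaging measure puts positive mass on the set $P_{ac}(\R^d)$ of absolutely continuous probability measures. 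So the remaining task is to show that for a.e.\ $l$, $\tilde\Omega_l$ charges $P_{ac}(\R^d)$ --- in fact, we will see it gives it full mass.

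This last point follows from absolute continuity of $\Omega$-a.e.\ $\nu$ via Fubini. Fix $\nu\in M_{ac}(\R^d\times\R_{\ge0})$. The vertical rescaling $(id\times F_\nu)$ only reparametrizes the vertical variable by the absolutely continuous, nondecreasing cumulative function $F_\nu$, and a direct change-of-variables computation gives $\tilde\nu\ll\mathcal{L}^{d+1}$ on $\R^d\times[0,1]$; since the vertical marginal of $\tilde\nu$ is $\mathcal{L}^1$ on $[0,1]$, its disintegration satisfies $\tilde\nu_l\in P_{ac}(\R^d)$ for a.e.\ $l$. Consequently the set $\{(l,\nu):\tilde\nu_l\notin P_{ac}(\R^d)\}$ has Lebesgue-null $\nu$-sections, hence --- modulo the routine joint measurability of $(l,\nu)\mapsto\tilde\nu_l$ and the (known) Borel measurability of $P_{ac}(\R^d)$ in $P(\R^d)$ --- is $\mathcal{L}^1\otimes\Omega$-null, so for a.e.\ $l$ we get $\tilde\nu_l\in P_{ac}(\R^d)$ for $\Omega$-a.e.\ $\nu$, i.e.\ $\tilde\Omega_l(P_{ac}(\R^d))=1$. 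Combining this with the uniqueness facts of the previous paragraph and the reconstruction formula yields a unique $Bar^{LW}(\Omega)$.

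I expect the main obstacle to be the uniqueness input for the layers in exactly the form needed, namely uniqueness of a Wasserstein barycenter under the hypothesis that the averaging measure puts positive mass on absolutely continuous measures, in the continuous (over $\Omega$) rather than the finitely-supported setting; if this cannot simply be quoted it has to be reproved, e.g.\ via strict convexity along generalized geodesics based at an absolutely continuous measure. A secondary nuisance is the measurability bookkeeping needed to make the Fubini step rigorous. By contrast, existence and the one-dimensional uniqueness of $Bar^W(\Omega^V)$ are comparatively routine.
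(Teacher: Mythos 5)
Your proposal is correct and follows essentially the same route as the paper: reduce via Proposition~\ref{prop:decomp} to uniqueness of the vertical barycenter and of the layer barycenters, observe that absolute continuity of $\Omega$-a.e.\ $\nu$ forces $\tilde\Omega_l$ to be concentrated on $P_{ac}(\R^d)$ for a.e.\ $l$, and invoke uniqueness of Wasserstein barycenters for absolutely continuous marginals. The one obstacle you flag---uniqueness for a general averaging measure $\Omega$ rather than a finitely supported one---is exactly what the paper's citation of \cite{KimPass17} supplies, so nothing needs to be reproved.
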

\begin{proof}
As $\Omega \in P (M_{ac}(\R^d \times \R_{\ge 0}))$, it also holds that  $\tilde \Omega_l \in P (M_{ac}(\R^d))$ for a.e. $l$.  Then uniqueness of $\tilde \mu_l$ follows from \cite{KimPass17}. 
\end{proof}

 The rescaled version $ \tilde Bar^{LW}(\Omega) \in P(\mathbb{R}^d \times
   [0,1])$ of the layerwise-Wasserstein barycenter $Bar^{LW}(\Omega)$ has the disintegration $ d\tilde
  Bar^{LW}(\Omega)(x,l)=d \tilde Bar^{LW}_l(\Omega) dl$, where each
  $Bar^{LW}_l(\Omega)$ is a Wasserstein barycenter of the $\tilde
  \Omega_l$.  The rescaling mapping $F_{Bar^{LW}(\Omega)}$ satisfies
\begin{align}\label{eqn:F-LW}
   F_{Bar^{LW}(\Omega)}(y) =[\int F_\nu^{-1}d\Omega(\nu)]^{-1}(y).
\end{align}

We note here associativity, in the two dimensional case, i.e. on $\R
\times \R_{\ge 0}$, of the layerwise-Wasserstein barycenter of
probability measures $\mu_1,...,\mu_N$ with weights
$\lambda_1,...,\lambda_N$, where $\sum_{i=1}^N\lambda_i=1$ and each
$\lambda_i \geq 0$.
\begin{proposition}(Associativity of $2$-dimensional layerwise-Wasserstein barycenters)
  Assume that $d=1$ and $\lambda_1 +\lambda_2 +\lambda_3 =1$.  Then
  \begin{align*}
    & Bar^{LW}(\lambda_1\delta_{\mu_1}+\lambda_2\delta_{\mu_2}+\lambda_3\delta_{\mu_3}) \\
    &=Bar^{LW}\Big((\lambda_1+\lambda_2)\delta_{Bar^{LW}(\frac{\lambda_1}{\lambda_1+\lambda_2}\delta_{\mu_1}+\frac{\lambda_2}{\lambda_1+\lambda_2}\delta_{\mu_2})} +\lambda_3\mu_3\Big).
  \end{align*}
\end{proposition}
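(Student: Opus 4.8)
The plan is to reduce the claimed identity, via Proposition~\ref{prop:decomp} and the explicit construction of $Bar^{LW}$ that follows it, to the corresponding associativity statements for the one-dimensional Wasserstein barycenters appearing in the vertical direction and in each layer; these latter associativities are essentially automatic. The key fact I will use is that for probability measures $\alpha_1,\dots,\alpha_N$ on $\R$ (or on $\R_{\ge 0}$) with weights $\lambda_i\ge 0$ summing to $1$, the $W_2$-barycenter is unique and its quantile function (generalized inverse cumulative distribution function) $Q_\beta$ equals $\sum_i\lambda_i Q_{\alpha_i}$: this follows from the identity $W_2^2(\gamma,\gamma')=\int_0^1(Q_\gamma-Q_{\gamma'})^2\,dt$ in one dimension together with the strict convexity of $Q\mapsto\sum_i\lambda_i\|Q-Q_{\alpha_i}\|_{L^2(0,1)}^2$, once one notes that $\sum_i\lambda_i Q_{\alpha_i}$ is again a valid quantile function (a nonnegative combination of nondecreasing, and of nonnegative, functions being of the same type). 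In particular one-dimensional $W_2$-barycenters are \emph{linear} in the quantile functions of the data, and therefore associative in the trivial sense that, with $\lambda:=\lambda_1+\lambda_2$ (which we may take positive, as is implicit in the statement),
\[
\lambda_1 Q_{\alpha_1}+\lambda_2 Q_{\alpha_2}+\lambda_3 Q_{\alpha_3}=\lambda\Big(\tfrac{\lambda_1}{\lambda}Q_{\alpha_1}+\tfrac{\lambda_2}{\lambda}Q_{\alpha_2}\Big)+\lambda_3 Q_{\alpha_3}.
\]

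Next I would set $\Omega:=\lambda_1\delta_{\mu_1}+\lambda_2\delta_{\mu_2}+\lambda_3\delta_{\mu_3}$, $\sigma:=Bar^{LW}\big(\tfrac{\lambda_1}{\lambda}\delta_{\mu_1}+\tfrac{\lambda_2}{\lambda}\delta_{\mu_2}\big)$ and $\Omega':=\lambda\,\delta_\sigma+\lambda_3\delta_{\mu_3}$ (reading $\lambda_3\mu_3$ in the statement as $\lambda_3\delta_{\mu_3}$), and observe that, because $d=1$, each vertical marginal lies in $P(\R_{\ge 0})$ and each rescaled layer in $P(\R)$, so every Wasserstein barycenter occurring in Proposition~\ref{prop:decomp} and in the construction following it is one-dimensional, hence unique; thus $Bar^{LW}$ is unambiguously defined here even without absolute continuity. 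By that construction, the rescaled layer $l$ of $Bar^{LW}(\Omega)$ is the $W_2$-barycenter of $\tilde\Omega_l=\lambda_1\delta_{(\tilde\mu_1)_l}+\lambda_2\delta_{(\tilde\mu_2)_l}+\lambda_3\delta_{(\tilde\mu_3)_l}$, so it has quantile function $\lambda_1 Q_{(\tilde\mu_1)_l}+\lambda_2 Q_{(\tilde\mu_2)_l}+\lambda_3 Q_{(\tilde\mu_3)_l}$, and its vertical marginal has quantile function $\lambda_1 Q_{\mu_1^V}+\lambda_2 Q_{\mu_2^V}+\lambda_3 Q_{\mu_3^V}$ (equivalently \eqref{eqn:F-LW}). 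The same construction gives that $\tilde\sigma_l$ has quantile function $\tfrac{\lambda_1}{\lambda}Q_{(\tilde\mu_1)_l}+\tfrac{\lambda_2}{\lambda}Q_{(\tilde\mu_2)_l}$ and $\sigma^V$ quantile function $\tfrac{\lambda_1}{\lambda}Q_{\mu_1^V}+\tfrac{\lambda_2}{\lambda}Q_{\mu_2^V}$; applying it once more to $\Omega'$, the rescaled layer $l$ of $Bar^{LW}(\Omega')$ has quantile function $\lambda Q_{\tilde\sigma_l}+\lambda_3 Q_{(\tilde\mu_3)_l}$ and its vertical marginal has quantile function $\lambda Q_{\sigma^V}+\lambda_3 Q_{\mu_3^V}$. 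By the displayed identity the rescaled layers of $Bar^{LW}(\Omega)$ and $Bar^{LW}(\Omega')$ agree for a.e. $l$, and their vertical marginals agree. Since each of these two measures is recovered from its rescaled layers and vertical marginal via $d\mu(x,y)=d\tilde\mu_{F_{\mu^V}(y)}(x)\,d\mu^V(y)$, I conclude $Bar^{LW}(\Omega)=Bar^{LW}(\Omega')$, which is the assertion.

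The only point requiring care is the bookkeeping with the vertical rescaling: once every measure is rescaled to have uniform vertical marginal, the layers of $\mu_1,\mu_2,\mu_3$ and of all intermediate barycenters are parametrized by the \emph{same} variable $l\in[0,1]$, so the layerwise construction really can be carried out termwise in $l$ and the three quantile functions combined linearly — this is precisely the content of the construction following Proposition~\ref{prop:decomp} together with the fact, recorded there, that the rescaled barycenter disintegrates into layerwise Wasserstein barycenters. It is also worth stressing that one-dimensionality ($d=1$) is essential here: $W_2$-barycenters of three or more measures in $\R^d$ with $d\ge 2$ need not be associative, and the argument breaks down in that case precisely because the layers cease to depend linearly on the data.
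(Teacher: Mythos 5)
Your proof is correct and takes the same route as the paper, whose entire argument is the one-line remark that the result ``follows immediately from the corresponding result in one dimension for Wasserstein barycenters''; you have simply supplied the details (linearity of one-dimensional $W_2$-barycenters in the quantile functions, applied layer by layer and to the vertical marginals via Proposition~\ref{prop:decomp}). No gaps.
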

This proposition is potentially useful in certain computations, as when one
adds a new sample $\mu_{N+1}$ root system to a family of $N$ root
systems with a (previously computed) barycenter $\bar \mu$, one can
find the barycenter of the augmented family by computing the
appropriately weighted barycenter of $\mu_{N+1}$ and $\bar \mu$,
rather than the more difficult computation of the barycenter of the
new family of $N+1$ systems.
\begin{proof}
  The result follows immediately from the corresponding result in one dimension for Wasserstein barycenters.
\end{proof}
\begin{remark}
  In our motivating application, we only distinguish between root
  systems up to rotation about the vertical axis; that is, we wish to
  identify two systems whenever we can transform one system to the
  other via a rotation fixing $y$.  For actual root systems then, the
  following distance is relevant:
  \begin{definition}[Horizontally symmetrized layerwise-Wasserstein distance]We define the horizontally symmetrized layerwise-Wasserstein distance $d_{LW, symm}^2 (\mu, \nu)$ between $\mu$ and $\nu$ by
    $$
    d_{LW, symm}^2 (\mu, \nu) = \min_{R\in SO(d)}d_{LW}^2 (R_\#\mu, \nu),
    $$
where $SO(d)$ denotes the special orthogonal group on the horizontal directions $\mathbb{R}^d$. 
  \end{definition}

 Note that $d_{LW, symm}$ is a metric on the set of equivalence
  classes of probability measures under horizontal rotational equivalence (that
  is, $\nu \sim \mu$ if $\nu =R_\#\mu$ for some rotation $R \in
  SO(d)$).  A horizontally symmetrized Wasserstein barycenter
  $Bar^{LW}_{symm}(\Omega)$ of a measure $\Omega \in P (M(\R^d \times
  \R_{\ge 0}))$ is then a metric barycenter with respect to this
  distance; that is, a minimizer of:
$$
  \nu \mapsto \int_{M(\R^d \times \R_{\ge 0})}d_{LW,symm}^2(\nu,\mu)d\Omega(\mu).
$$
Equivalently, $Bar^{LW}_{symm}(\Omega)$ minimizes 
$$
\nu \mapsto \min_{R_\mu \in SO(d) \forall \mu \in P(M)}\int_{M(\R^d \times \R_{\ge 0})}d_{LW}^2(\nu,(R_\mu)_\#\mu)d\Omega(\mu).
$$
Analogously, one could also consider rotationally symmetrized versions of the classical Wasserstein distance:
$$
W^2_{2, symm}(\mu,\nu) := \min_{R\in SO(d)} W_2^2(\mu,R_\#\nu)
$$
and corresponding barycenters, which are minimizers of:
\begin{equation}\label{eqn: symmetrized wass bc}
\nu \mapsto \min_{R_\mu \in SO(d) \text{ }\forall\mu \in P(M)}\int_{P(\R^d \times \R_{\ge 0})}W_{2}^2(\nu,(R_\mu)_\#\mu)d\Omega(\mu).
\end{equation}

Symmetrized Wasserstein barycenters are more natural for the root
interpolation problem than classical Wasserstein barycenters.  One of
our goals in this paper is to demonstrate that symmetrized
layerwise-Wasserstein barycenters are better suited for this problem
than classical (symmetrized or unsymmetrized) Wasserstein barycenters;
to this end, we provide examples in Section \ref{subsect:
  comparision} of measures $\mu_1, ... \mu_m$ which are root like in a
certain sense (skeletons in the nomenclature of the next section), for
which the symmetrized Wasserstein barycenter of
$\frac{1}{m}\sum_{i=1}^m\delta_{\mu_i}$ does not resemble a root (that
is, is not a skeleton). Their layerwise-Wasserstein barycenter, on the
other hand, has a much more root like structure (see Theorem
\ref{thm:lw-bary} below).  
\end{remark}
\section{Skeletal measures}\label{sec:skeleton}

Real plant root systems consist of limbs with thickness.  However,
biologist often approximate roots by their "skeletons," in which each
limb is replaced by a one dimensional curve, thus retaining the
topological, or detritic structure of the root, but losing its
thickness.  Below, we provide a formal mathematical definition of
skeletons, and introduce skeletal measures, which are essentially
distributions of mass supported on them.

\begin{definition}\label{def:skeleton}
Let $Y =[0,\bar y] \subset \R$, be an interval whose length $\bar y$,
represents the vertical depth of the root.  A {\em weak skeletal root}
consists of the graphs of a finite union of curves, $$\bigcup_{i=1}^N
{\rm graph}(g_i),$$ where each $g_i: [\underline y_i, \overline y_i]
\rightarrow \mathbb{R}^d$ is a Lipschitz function defined on a
subinterval $[\underline y_i, \overline y_i] \subseteq Y$, satisfying
the following properties:
\begin{itemize}
\item[\bf S1] (Roots start from a common stem) $\underline y_1=0$ and $\underline y_i >0$ for each $i=2,...N$.
\item[\bf S2] (Limbs emerge from older limbs) For each $i=2,....N$, there is some $j <i$ such that $\underline y_i \in (\underline {y}_j, \overline y_j)$ and $g_i(\underline y_i) = g_j(\underline y_i)$.

\end{itemize}
A {\em strong skeletal root} is a weak skeletal root which satisfies the additional condition:
\begin{itemize}
\item[\bf S3] (Limbs never cross each other)  For each $i \neq j$ and all $y \in (\underline y_i, \overline y_i] \cap (\underline y_j, \overline y_j]$, we have $g_i(y) \neq g_j(y)$.
\end{itemize}
\end{definition}
We next define strong skeletal root measures.
\begin{definition}
  A \emph{strong skeletal root measure} is a probability measure whose
  support is an \emph{entire} strong skeletal root, which is
  absolutely continuous with respect to the one dimensional Hausdorff
  measure.
\end{definition}
 
Strong skeletal root measures seem to be reasonable proxies for real
roots. 
As we will see
below, layerwise-Wasserstein barycenters of strong skeletal root
measures preserve the one dimensional structure of the support (this
is an important distinction from conventional Wasserstein barycenters
-- see Example \ref{ex: high dim barycenter} below).  Unfortunately,
they are not always strong skeletal root measures, for two reasons: 1)
the support may be disconnected, and 2) The non-crossing property
holds only in a weaker sense.  This motivates the following
definition:

\begin{definition}
A \emph{weak skeletal root measure} is a probability measure supported
on a weak skeletal root, which is absolutely continuous with respect
to the one dimensional Hausdorff measure, satisfying the following
additional property:
\begin{itemize}
\item[\bf W3]  For each $i \neq j$ and all $y \in (\underline
  y_i, \overline y_i] \cap (\underline y_j, \overline y_j]$, such that
  $g_i(y) =g_j(y)$, we have either $\lim_{z \rightarrow y^-}
  \mu_{z}(\{g_i(z)\}) =0$ or $\lim_{z \rightarrow y^-}
  \mu_{z}(\{g_j(z)\}) =0$.
\end{itemize}
 where $\mu_y =\tilde \mu_{F_\mu(y)}$ is the conditional probability of $d\mu(x,y) =d\mu_y(x)d\mu^V(y)$.
\end{definition}
\noindent Note that by construction,  for each  $l$, the layer $\tilde \mu_l$ of a weak skeletal root measure $\mu$, is a convex combination of Dirac masses. 

Obviously strong skeletal root measures are weak skeletal root measures; weak skeletal root measures are essentially ``roots with missing parts," and have a weaker version {\bf W3}  of the no crossing condition. While the layerwise-Wasserstein barycenter of several strong skeletal root measures may not be a strong skeletal root measure, we are able to show below that it is a weak skeletal root measure. 
\begin{remark}
Interpreting each $graph(g_i)$ as a limb, condition {\bf S3} expresses the natural expectation that  limbs do not cross.  We interpret {\bf W3} as a weaker version of this: if $g_i(y) =g_j(y)$ and  $\lim_{z \rightarrow y^-} \mu_{z}(\{g_i(z)\}) =0$, we interpret $g_i$ as consisting of two limbs: an upper limb $g_i^1$, defined by restricting $g_i$ to $[\underline y_i, y]$, and a lower limb, $g_i^2$, obtained by restricting $g_i$ to $[y,\overline y_i]$, emerging from the older limb $y_j$ at the point $y$.  This seems reasonable to us, since the hypothesis $\lim_{z \rightarrow y^-} \mu_{z}(\{g_i(z)\}) =0$ means that there is no mass at $y$ coming from the upper limb; the upper limb thus ends at the point $y$. 

By interpreting a weak root as a tree in this sense, one can compute topological properties which are defined only for loop-free structures (including, for example, the Horton-Strahler index \cite{Toroczkai01}, often used by biologists to measure the topological complexity of root systems).

\end{remark}

\begin{remark}
 Skeletons can be computationally useful in practice.  Algorithms are
 available to construct skeletons from real root system data,
 essentially by tracing the center of mass of the cross sections of
 each limb \cite{Bucksch14}.  Computing layerwise-Wasserstein barycenters of these
 skeletons is then much less computationally intensive than computing
 the barycenters of the original roots, since each layer is
 discretized by many fewer points, but may still provide valuable
 biological insight about the "average" topological structure of the
 family of root systems.
\end{remark}

 Assuming that $\mu$ is a (weak or strong, respectively)
  skeletal root measure, supported on the skeletal root
  $\bigcup_{i=1}^N {\rm graph}(g_i),$ and the rescaling map $F_\mu$ is
  bi-Lipschitz, $\tilde \mu$ is also a (respectively weak or strong)
  skeletal root measure, supported on the skeletal root
  $\bigcup_{i=1}^N {\rm graph}(\tilde g_i),$ where $\tilde g_i :=
  g_i\circ F_\mu^{-1}$.  Note that the domain $[\underline l_i,
    \overline l_i]:=[F_\mu (\underline y_i), F_\mu(\overline y_i)]$ of
  each rescaled limb $\tilde g_i$ is contained in $[0,1]$.  We call
  $\bigcup_{i=1}^N {\rm graph}(\tilde g_i)$ a \textit{rescaled}
  skeletal root.

\subsection{Layerwise Wasserstein barycenters of skeletal root measures}\label{sec:LW-skel}
We now prove that layerwise-Wasserstein barycenters of weak skeletal
root measures are themselves weak-skeletal root measures. 
\begin{theorem}\label{thm:lw-bary}
	Let $\mu_1,....,\mu_m \in P( \mathbb{R}^d \times \R_{\ge 0})$ be compactly supported  weak skeletal root measures such that $l\mapsto (\tilde \mu_i)_l$ is weak-$*$ continuous and
	$F_{\mu_i}$ is bi-Lipschitz for each $i$, and $\lambda_1,....,\lambda_m >0$ with $\sum_{i=1}^m\lambda_i=1$.
	Then any layerwise-Wasserstein barycenter $Bar^{LW}(\sum_{\alpha} \lambda_\alpha \mu_\alpha)$ of $\mu_1,....,\mu_m$ with weights $\lambda_1,....,\lambda_m$ is also a weak skeletal root measure.
\end{theorem}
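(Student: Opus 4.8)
The plan is to leverage Proposition~\ref{prop:decomp}, which tells us that the layerwise barycenter $\mu := Bar^{LW}(\sum_\alpha \lambda_\alpha\mu_\alpha)$ is built layer-by-layer: $\mu^V$ is the (one-dimensional) Wasserstein barycenter of the vertical marginals $\mu_\alpha^V$, and for a.e.\ $l$, the layer $\tilde\mu_l$ is a Wasserstein barycenter of $\tilde\Omega_l = \sum_\alpha\lambda_\alpha\delta_{(\tilde\mu_\alpha)_l}$. Since each $\mu_\alpha$ is a weak skeletal root measure with $F_{\mu_\alpha}$ bi-Lipschitz, the rescaled measure $\tilde\mu_\alpha$ is itself a weak skeletal root measure supported on $\bigcup_i \mathrm{graph}(\tilde g_i^{(\alpha)})$ with $\tilde g_i^{(\alpha)} = g_i^{(\alpha)}\circ F_{\mu_\alpha}^{-1}$, and each layer $(\tilde\mu_\alpha)_l$ is a finite convex combination of Dirac masses located at the points $\tilde g_i^{(\alpha)}(l)$. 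The Wasserstein barycenter in $\mathbb{R}^d$ of finitely many finitely-supported measures is again finitely supported, with atoms obtained as $\lambda$-weighted averages of atoms of the inputs; so $\tilde\mu_l$ is a convex combination of Diracs at points of the form $\sum_\alpha \lambda_\alpha \tilde g_{i_\alpha}^{(\alpha)}(l)$ for suitable index selections $(i_\alpha)_\alpha$.

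First I would make this atom-tracking precise: for each fixed $l$, an optimal coupling realizing the barycenter picks, for each $\alpha$, a limb index $i_\alpha(l)$ contributing to a given barycentric atom, and the atom sits at $G_{(i_\alpha)}(l) := \sum_\alpha\lambda_\alpha \tilde g_{i_\alpha}^{(\alpha)}(l)$. The map $l\mapsto G_{(i_\alpha)}(l)$ is Lipschitz (a convex combination of Lipschitz functions), defined on the intersection of the rescaled limb domains $\bigcap_\alpha[\underline l^{(\alpha)}_{i_\alpha},\overline l^{(\alpha)}_{i_\alpha}]$. The weak-$*$ continuity hypothesis on $l\mapsto(\tilde\mu_i)_l$, together with stability of Wasserstein barycenters, should give that $l\mapsto\tilde\mu_l$ is weak-$*$ continuous, which is what lets us select the index assignments consistently on subintervals and conclude that $\tilde\mu$ is supported on a finite union of graphs of Lipschitz curves $G_{(i_\alpha)}$. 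I would then check the skeletal axioms for this union: property \textbf{S1} (a common stem at $l=0$) holds because every $\mu_\alpha$ has its root-$1$ limb starting at $l=0$, so the selection with all $i_\alpha=1$ is present at $l=0$; property \textbf{S2} (new limbs emerge from older ones) must be verified by arguing that whenever a new index-tuple branch appears at some $l_0>0$, at least one coordinate $i_\alpha$ changes from the previous selection, and by \textbf{S2} for $\mu_\alpha$ that new limb attaches to an older $\mu_\alpha$-limb at $l_0$ with matching position, so the averaged curve attaches to the averaged older curve. Finally, the non-crossing condition \textbf{W3}: when two barycentric curves $G_{(i_\alpha)}$ and $G_{(j_\alpha)}$ meet at a point $\bar x$ at level $l_0$, averaging the inequalities coming from \textbf{W3} for each $\mu_\alpha$ (the vanishing of the conditional mass of the upper limb from at least one index) should force the analogous vanishing for the barycenter — here one uses that the mass on $G_{(i_\alpha)}$ as $z\to l_0^-$ is controlled by the product/minimum of the transported masses from the $\mu_\alpha$ via the barycentric coupling. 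Then I would pull everything back through $F_\mu^{-1}$, noting \eqref{eqn:F-LW} and that $F_\mu$ is bi-Lipschitz (which needs a short argument: $F_\mu^{-1} = \int F_{\mu_\alpha}^{-1}d\Omega$ is a convex combination of bi-Lipschitz maps, hence bi-Lipschitz), so that $\mu = Bar^{LW}$ is supported on $\bigcup \mathrm{graph}(G_{(i_\alpha)}\circ F_\mu)$, a weak skeletal root, and is absolutely continuous with respect to one-dimensional Hausdorff measure because each layer is atomic and $\mu^V$ is absolutely continuous (the barycenter of a.c.\ one-dimensional measures).

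The main obstacle I anticipate is the bookkeeping around \textbf{W3} and the consistent selection of index-tuples: the Wasserstein barycenter coupling at each layer $l$ need not vary continuously or even measurably in an obviously canonical way when atoms collide or split, so making rigorous the claim that "$\tilde\mu$ is supported on finitely many Lipschitz graphs satisfying \textbf{W3}" requires care. Concretely, I would need a lemma stating that on each maximal subinterval of $[0,1]$ where the multiplicity pattern of $\tilde\mu_l$ is constant, the atoms move along the fixed Lipschitz curves $G_{(i_\alpha)}$, and that the (finitely many) transition points are where either some $\tilde g_i^{(\alpha)}$ domain ends, or some pair of $\mu_\alpha$-atoms merges — finiteness of these events following from each $\mu_\alpha$ having finitely many limbs with Lipschitz (hence finitely-oscillating on compact domains, after possibly a further mild hypothesis) graphs. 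The weak-$*$ continuity assumption in the hypotheses is presumably exactly what is needed to control this; the delicate point is translating continuity of the measure-valued map into the combinatorial statement about limb structure, and verifying that the weaker crossing condition \textbf{W3}, rather than the strong \textbf{S3}, is genuinely what survives averaging (with the explicit example in Section~\ref{subsect: comparision} showing \textbf{S3} can fail).
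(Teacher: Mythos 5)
Your overall architecture matches the paper's: you identify the candidate support curves $G_{(i_\alpha)}=\sum_\alpha\lambda_\alpha \tilde g^{(\alpha)}_{i_\alpha}$ (what the paper calls the \emph{ghost}), observe that \textbf{S1} and \textbf{S2} are inherited, reduce the whole problem to verifying \textbf{W3}, and pull everything back through the bi-Lipschitz rescaling. But the step you yourself flag as delicate is exactly where your proposed mechanism breaks down. You suggest verifying \textbf{W3} by ``averaging the inequalities coming from \textbf{W3} for each $\mu_\alpha$.'' This cannot work as stated: two ghost curves $G_{(i_\alpha)}$ and $G_{(j_\alpha)}$ can collide at a level $l$ without \emph{any} of the underlying limbs colliding, because the weighted differences $\lambda_\alpha\bigl(\tilde g^\alpha_{i_\alpha}(l)-\tilde g^\alpha_{j_\alpha}(l)\bigr)$ can cancel across $\alpha$. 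In that situation \textbf{W3} for the individual $\mu_\alpha$ gives you nothing to average, and your heuristic that the barycentric mass is ``controlled by the product/minimum of the transported masses'' has no justification.

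The missing idea, which is the heart of the paper's Lemma~\ref{lem:W3}, is the multi-marginal formulation \eqref{eqn: multi-marginal}: the layer barycenter $\tilde\mu_l$ is the pushforward of an optimal multi-marginal plan $\tilde\gamma_l$ under $\Delta_\lambda(x_1,\dots,x_m)=\sum_\alpha\lambda_\alpha x_\alpha$, and $\Delta_\lambda$ is injective with a Lipschitz inverse \emph{on the support of an optimal plan} \cite{KimPass17}. Arguing by contradiction along a sequence $l_k\uparrow l$ on which the ``wrong'' ghost curve carries mass at least $\epsilon$, one extracts weak-$*$ limits of the $\tilde\gamma_{l_k}$ (this is where the weak-$*$ continuity of $l\mapsto(\tilde\mu_\alpha)_l$ enters), averages two optimal limit plans to obtain a single optimal plan charging both tuples $(\tilde g^\alpha_{i_\alpha}(l))_\alpha$ and $(\tilde g^\alpha_{j_\alpha}(l))_\alpha$, and then the Lipschitz invertibility of $\Delta_\lambda$ forces these tuples to coincide coordinatewise. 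Only then does a genuine limb collision $\tilde g^\alpha_{i_\alpha}(l)=\tilde g^\alpha_{j_\alpha}(l)$ occur in some single $\mu_\alpha$, so that \textbf{W3} for $\mu_\alpha$ contradicts the lower bound on the marginal masses. This contradiction-plus-compactness argument also lets the paper bypass entirely the combinatorial machinery you anticipate needing (consistent index selection on maximal subintervals, finiteness of transition points); none of that appears in, or is needed for, the paper's proof.
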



\begin{remark}

We expect this result to play an important role in biological
applications.  As mentioned above, given a family of root systems, we will propose in
future work interpreting the layerwise-Wasserstein barycenter as the
best representative of that family.  It is therefore desirable to
compute certain biologically relevant traits of the barycenter,
especially those traits that rely on its dendritic structure,   for
instance the total root length and the Horton-Strahler (HS) index \cite{Toroczkai01}.
The HS index in particular relies on the non crossing property, 
can be defined for weak skeletal root measures, thanks to {\bf W3},
but not for more general unions of graphs such as weak skeletal roots.
\end{remark}

The key tool in the proof of this theorem is the barycentric ghost, which we define now. 
\begin{definition}\label{def:ghost}
  For $\alpha =1,2,...,m$, let $\tilde S_\alpha:=\{\tilde
  g_{i_\alpha}^\alpha: i_\alpha=1,2,...N_\alpha \}$ be a  rescaled
  skeletal root, and let $\lambda = (\lambda_1,\lambda_2,...,\lambda_m)$ with $\lambda_1,....,\lambda_m >0$ be a collection of 
  weights with $\sum_{\alpha=1}^{m}\lambda_\alpha=1$.
  
  For fixed indices $i_1,...,i_m$, whenever the intersection
  $\cap_{\alpha=1}^m[\underline l^\alpha_{i_\alpha},\overline
    l^\alpha_{i_\alpha} ]$ of domains $[\underline
    l^\alpha_{i_\alpha},\overline l^\alpha_{i_\alpha} ]$ of the
family  $\{ \tilde g^\alpha_{i\alpha}\}$ is non-empty, we define the curve
  $$
  \tilde G^\lambda_{i_1i_2,...,i_m}:=\sum_{\alpha=1}^m\lambda_\alpha \tilde g^\alpha_{i_\alpha} .
  $$	
  The {\em ghost} of the family  $\{\tilde S_\alpha\}$ with weights $\lambda$ is then the collection of curves $\tilde G^\lambda_{i_1i_2,...,i_m}$.
  
\end{definition}
At each slice $l \in [0,1]$, the set $\tilde
G^\lambda_{i_1i_2,...,i_m}(l)$ represents the Euclidean barycenters of
all possible combinations of $\tilde g^\alpha_{i_\alpha}(l) $ in the
supports of the discrete sliced layers. The Wasserstein barycenter of
the layers is supported on these points, therefore, for skeletal root
measures $\mu_1,...,\mu_m$, supported respectively on
$\bigcup_{i_\alpha=1}^{N_\alpha} {\rm graph}(g^\alpha_{i_\alpha})$ we have the
following:
\begin{align}\label{eqn:ghostsupports}
&\hbox{If $(x,y)\in {\rm supp} \left(Bar^{LW}(\sum_{\alpha =1}^m \lambda_\alpha \delta_{\mu_\alpha})\right) $}, 
\\\nonumber
&\hbox{ then $x=\tilde G^\lambda_{i_1i_2,...,i_m}\Big((\sum_{\alpha =1}^{m}\lambda_\alpha F_{\mu_\alpha}^{-1})^{-1}(y)\Big)$ for some choice of $i_1,....,i_m$. }
\end{align}

Given probability root measures, the ghost of their rescaled supports  $\bigcup_{i_\alpha=1}^N {\rm graph}(\tilde g^\alpha_{i_\alpha})$ can be un-rescaled via the mapping $y \mapsto (\sum_{\alpha=1}^m \lambda_\alpha F_{\mu_\alpha}^{-1})^{-1}(y)$; the un-rescaled ghost is then the union of the graphs $G^\lambda_{i_1i_2,...,i_m}(y):=\tilde G^\lambda_{i_1i_2,...,i_m}  \Big((\sum_{\alpha=1}^m \lambda_\alpha F_{\mu_\alpha}^{-1})^{-1}(y)\Big)$.

It is then easy to see that the layerwise-Wasserstein barycenter has support contained in the (un-rescaled) ghost, though it typically won't fill it out.  We think of the ghost sitting in the background; it is the largest possible potential support of the barycenter.  We think of the actual support of the barycenter as sitting in the foreground on top of it.

Now, the ghost clearly satisfies {\bf S1} (starting as stem) and {\bf
  S2} (limbs emerge from older limbs) in the definition of skeletal
roots.  It does not generally satisfy {\bf S3} (non crossing).  In
order to verify that the layerwise-Wasserstein barycenter is a weak
skeletal root measure, we must therefore show that it satisfies the
weak non-crossing property {\bf W3}.

The following Lemma essentially verifies {\bf W3} for the rescaled
barycenter; since it is clear that the bi-Lipschitz rescaling
$\sum_{\alpha =1}^{m}\lambda_\alpha F_{\mu_\alpha}^{-1}$, which pushes
$\tilde Bar^{LW}(\sum_{\alpha} \lambda_\alpha \mu_\alpha)$ forward to
$Bar^{LW}(\sum_{\alpha} \lambda_\alpha \mu_\alpha)$ preserves this
property, the  lemma implies Theorem \ref{thm:lw-bary}.

\begin{lemma}\label{lem:W3}
Under the same assumptions as in Theorem~\ref{thm:lw-bary}, let $\mu=Bar^{LW}(\sum_{\alpha} \lambda_\alpha \mu_\alpha)$ be a layerwise-Wasserstein barycenter of the $\{\mu_\alpha\}$'s.  
Set $l=(\sum_{\alpha =1}^{m}\lambda_\alpha F_{\mu_\alpha}^{-1})^{-1}(y)$, and 
	suppose $x =\tilde G^\lambda_{j_1j_2,...,j_m}(l) =\tilde G^\lambda_{i_1i_2,...,i_m} (l)$, where $j_\alpha \neq i_\alpha$ for at least one $\alpha$ and $l$ is not the minimal point in the domain of $\tilde G^\lambda_{i_1i_2,...,i_m}$ or $\tilde G^\lambda_{j_1j_2,...,j_m}$. 
 Moreover, suppose that $\lim_{z \to l^-} \tilde \mu_{z}(\tilde G^\lambda_{i_1i_2,...,i_m}(z))> 0$. 
	Then, 
	$$
	\lim_{z \rightarrow l^-}\tilde \mu_{z}(\{\tilde G^\lambda_{j_1j_2,...,j_m}(z)\} \setminus \{G^\lambda_{i_1i_2,...,i_m}(z)\} ) =0.  
	$$	
\end{lemma}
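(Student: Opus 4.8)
The plan is to work entirely with the rescaled barycenter $\tilde\mu=\tilde{Bar}^{LW}(\sum_\alpha\lambda_\alpha\mu_\alpha)$ on $\mathbb R^d\times[0,1]$, where by Proposition~\ref{prop:decomp} each layer $\tilde\mu_l$ is a Wasserstein barycenter of $\tilde\Omega_l$, and $\tilde\Omega_l$ is the pushforward of $\frac1m\sum_\alpha\delta_{\mu_\alpha}$ under $\nu\mapsto\tilde\nu_l$. Since each $(\tilde\mu_\alpha)_l$ is a finite convex combination of Dirac masses supported on $\{\tilde g^\alpha_{i_\alpha}(l)\}$, the Wasserstein barycenter $\tilde\mu_l$ is supported on the finite set of Euclidean barycenters $\{\tilde G^\lambda_{i_1\dots i_m}(l)\}$, and the weights it assigns are governed by an optimal multi-marginal plan $\pi_l\in P((\mathbb R^d)^m)$ coupling the $(\tilde\mu_\alpha)_l$: namely $\tilde\mu_l=(b^\lambda)_\#\pi_l$ where $b^\lambda(x_1,\dots,x_m)=\sum_\alpha\lambda_\alpha x_\alpha$. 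The mass $\tilde\mu_z(\tilde G^\lambda_{i_1\dots i_m}(z))$ is thus the $\pi_z$-mass of the set of tuples $(x_1,\dots,x_m)$ with $x_\alpha=\tilde g^\alpha_{i_\alpha}(z)$ for all $\alpha$ (plus possibly other tuples with the same barycentric image, but the hypothesis that $l$ is not a minimal point and continuity lets us restrict attention to $z$ slightly below $l$ where these curves are separated).

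The key structural input is a \emph{monotonicity/consistency} property of the optimal plans $\pi_z$ as $z$ varies, which should follow from the bi-Lipschitz and weak-$*$ continuity hypotheses together with the one-dimensional-in-each-fiber structure: along each curve $\tilde g^\alpha_{i_\alpha}$ the mass $\lim_{z\to l^-}(\tilde\mu_\alpha)_z(\tilde g^\alpha_{i_\alpha}(z))$ is well defined (this is exactly the content of condition {\bf W3} for $\mu_\alpha$, which tells us that at a merge point at least one incoming limb carries no mass). First I would establish that if $x=\tilde G^\lambda_{i_1\dots i_m}(l)=\tilde G^\lambda_{j_1\dots j_m}(l)$ with $i_\alpha\neq j_\alpha$ for some $\alpha$, then for that $\alpha$ we have $\tilde g^\alpha_{i_\alpha}(l)=\tilde g^\alpha_{j_\alpha}(l)$ is forced to be a merge point of $\mu_\alpha$'s skeleton — here is where I use that the curves $\{\tilde g^\beta_{i_\beta}\}$ and $\{\tilde g^\beta_{j_\beta}\}$ agree below $l$ for all $\beta$ with $i_\beta=j_\beta$, so the only way the weighted sums can coincide at $l$ is for the genuinely differing curves to collide, and by {\bf W3} applied to each $\mu_\alpha$ at least one side (say the $j_\alpha$ branch, after relabelling) has $\lim_{z\to l^-}(\tilde\mu_\alpha)_z(\tilde g^\alpha_{j_\alpha}(z))=0$.

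Then I would combine this across $\alpha$: the mass $\tilde\mu_z(\tilde G^\lambda_{j_1\dots j_m}(z))$ assigned to the ghost curve through the $j$-indices is bounded by the $\pi_z$-mass of $\prod_\alpha\{\tilde g^\alpha_{j_\alpha}(z)\}$, which in turn is at most $\min_\alpha(\tilde\mu_\alpha)_z(\tilde g^\alpha_{j_\alpha}(z))$ (marginal bound), and for the $\alpha$ identified above this tends to $0$. Subtracting off the mass that $\tilde\mu_z$ might legitimately share with $\tilde G^\lambda_{i_1\dots i_m}(z)$ when the two ghost curves happen to agree at $z$ is precisely why the statement is phrased with the set difference $\{\tilde G^\lambda_{j}(z)\}\setminus\{\tilde G^\lambda_{i}(z)\}$; on that set difference the barycentric preimages are disjoint from $\prod_\alpha\{\tilde g^\alpha_{i_\alpha}(z)\}$, so the bound by $\min_\alpha(\tilde\mu_\alpha)_z(\tilde g^\alpha_{j_\alpha}(z))\to0$ applies cleanly. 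The main obstacle I anticipate is the bookkeeping needed to justify passing from "mass of a ghost point $\tilde G^\lambda_{j_1\dots j_m}(z)$" to "a $\pi_z$-mass of a product of singletons": distinct index tuples can produce the same Euclidean barycenter even when the curves are disjoint (degenerate coincidences), and one must use the weak-$*$ continuity of $z\mapsto(\tilde\mu_\alpha)_z$, compactness of supports, and the fact that $l$ is an interior (non-minimal) point to guarantee that for $z$ in a left-neighborhood of $l$ the relevant curves $\tilde g^\alpha_{i_\alpha}(z),\tilde g^\alpha_{j_\alpha}(z)$ are genuinely separated except at $z=l$ itself, so the only degeneracy is the one being quotiented out in the statement. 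Handling this separation uniformly, and verifying that the optimal plans $\pi_z$ can be chosen measurably in $z$ so that the limsup of masses can be controlled, is the technical heart of the argument.
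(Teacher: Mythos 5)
Your overall architecture matches the paper's (multi-marginal plans $\pi_l$ with $\tilde\mu_l=(\Delta_\lambda)_\#\pi_l$, the marginal bound, an appeal to {\bf W3} for the samples, weak-$*$ limits of plans), but there is a genuine gap at the pivotal step. You assert that $\tilde G^\lambda_{i_1\dots i_m}(l)=\tilde G^\lambda_{j_1\dots j_m}(l)$ forces $\tilde g^\alpha_{i_\alpha}(l)=\tilde g^\alpha_{j_\alpha}(l)$ for the differing index $\alpha$, "because the only way the weighted sums can coincide is for the genuinely differing curves to collide." That is only true when \emph{exactly one} index differs: after cancelling the common terms you are left with $\sum_{\alpha:\,i_\alpha\neq j_\alpha}\lambda_\alpha\bigl(\tilde g^\alpha_{i_\alpha}(l)-\tilde g^\alpha_{j_\alpha}(l)\bigr)=0$, and when two or more indices differ these differences can cancel without any individual pair coinciding (e.g.\ $\lambda_1=\lambda_2=\tfrac12$, points $0,2$ versus $1,1$). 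The paper does not get this conclusion from algebra at all: it uses the fact that $\Delta_\lambda$ restricted to the support of an \emph{optimal} multi-marginal plan is injective with a Lipschitz inverse (cited from Kim--Pass). To invoke that, one must first place \emph{both} tuples $(\tilde g^\alpha_{i_\alpha}(l))_\alpha$ and $(\tilde g^\alpha_{j_\alpha}(l))_\alpha$ in the support of a \emph{single} optimal plan; the paper does this by producing one optimal limit plan charging the $j$-tuple (from the contradiction hypothesis along $l_k\uparrow l$) and a second one charging the $i$-tuple (this is exactly where the hypothesis $\lim_{z\to l^-}\tilde\mu_z(\tilde G^\lambda_{i_1\dots i_m}(z))>0$ is used — your proposal never uses it), and then averaging the two plans, which preserves optimality by linearity. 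Without this step your argument does not close except in the single-differing-index case.

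Two smaller issues. First, your marginal bound runs the wrong way: since $\tilde\mu_z=(\Delta_\lambda)_\#\pi_z$, the mass of a ghost point is the $\pi_z$-mass of its \emph{full} preimage, which \emph{contains} the product singleton $\prod_\alpha\{\tilde g^\alpha_{j_\alpha}(z)\}$; so $\tilde\mu_z(\tilde G^\lambda_{j_1\dots j_m}(z))\geq\pi_z(\prod_\alpha\{\tilde g^\alpha_{j_\alpha}(z)\})$, not $\leq$. The degeneracy (several tuples sharing one barycentric image at $z<l$) cannot be removed just by taking $z$ close to $l$; the paper handles it by pigeonholing over the finitely many tuples to extract one specific tuple $(j_1',\dots,j_m')$ carrying mass $>\epsilon$ along a subsequence and running the argument for that tuple. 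Second, your "after relabelling" in the application of {\bf W3} gives away the conclusion: {\bf W3} only says \emph{one} of the two branches loses mass, and you must rule out that it is the $i_\alpha$ branch — which again requires the hypothesis on the $i$-ghost (its positive limiting mass forces $(\tilde\mu_\alpha)_z(\tilde g^\alpha_{i_\alpha}(z))$ to stay bounded away from zero), combined with the contradiction hypothesis giving the uniform lower bound $\epsilon$ on the $j_\alpha$ branch.
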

The proof of this lemma leverages a connection between the Wasserstein barycenter of $\sum_{\alpha}\lambda_\alpha \delta_{(\tilde \mu_\alpha)_l}$ and the multi-marginal extension of optimal transport, which is to minimize
\begin{equation}\label{eqn: multi-marginal}
\int_{(\R^d )^m}\sum_{\alpha ,\beta}\lambda_\alpha\lambda_\beta|x_\alpha-x_\beta|^2d\gamma(x_1,x_2,...,x_m)
\end{equation}
among all probability measures $\gamma$ on $(\R^d)^m$ whose marginals are the $(\tilde \mu_\alpha)_l$. It is well known that  
the mapping 
\begin{align*}
\Delta_\lambda:  (x_1,x_2,...,x_m) \rightarrow \sum_\alpha\lambda_\alpha x_\alpha
\end{align*}
pushes each solution $\tilde \gamma_l $ forward to a Wasserstein barycenter $\tilde \mu_l$ \cite{AguehCarlier11}, and this mapping is invertible with a Lipschitz inverse on the support of $\tilde \mu_l$; see e.g. \cite{KimPass17}. 
\begin{proof}[Proof of Lemma~\ref{lem:W3}]

For each layer $l$, we will denote by $\tilde \gamma_l  \in P(\R^d \times \cdots \times \R^d)$  a solution to the multi-marginal optimal transport problem \eqref{eqn: multi-marginal}. 
 Assume that the conclusion of the lemma fails. Then there exists $\epsilon >0$ and a sequence $l_k <l$ converging to $l$ such that $\tilde G^\lambda_{j_1j_2,...,j_m}(l_k) \neq \tilde G^\lambda_{i_1i_2,...,i_m}(l_k)$ and  $\tilde \mu_{l_k}(\tilde G^\lambda_{j_1j_2,...,j_m}(l_k))>\epsilon$, for large enough $k$.  
	
	 We first prove the lemma under the simplifying assumption that  
$\tilde G^\lambda_{j_1j_2...j_m}(l_k) \neq \tilde G^\lambda_{j'_1j'_2...j'_m}(l_k)$ for all $(j'_1,j'_2,...,j'_m) \neq (j_1,j_2,...,j_m)$.

This immediately implies that 
\begin{equation}\label{eqn: multi-marg charges point}
\tilde \gamma_{l_k}(\tilde g^1_{j_1}(l_k),\tilde g^2_{j_2}(l_k),...,\tilde g^m_{j_m}(l_k)) >\epsilon 
\end{equation}
and in particular
\begin{equation}\label{eqn: marginals charge graphs}
 (\tilde \mu_\alpha)_{l_k}(g^\alpha_{j_\alpha}(l_k)) >\epsilon,
\end{equation}
for $\alpha=1,2....,m$.
  After passing to a convergent subsequence, the $\tilde \gamma_{l_k}$ converge (in the weak-$*$ sense) to a measure $\tilde \gamma_l$ which is optimal in the multi-marginal problem for the $(\tilde \mu_{i_\alpha})_{l}$, and 
  $$\tilde \gamma_{l}(\tilde g^1_{j_1}(l),\tilde g^2_{j_2}(l),...,\tilde g^m_{j_m}(l)) \neq 0.
  $$  Exactly the same argument implies the existence of a second minimizer $\tilde \gamma'_{l}$ to the multi-marginal problem such that $\tilde \gamma'_{l}(\tilde g^l_{i_1}(l),\tilde g^2_{i_2}(l),...,\tilde g^m_{i_m}(l)) \neq 0.$

Although it is possible that $\tilde \gamma'_{l} \neq \tilde \gamma_{l}$, their linear average $\frac 1 2 \tilde \gamma'_{l} +\frac 1 2 \tilde \gamma_{l}$ is also optimal for the multi-marginal problem and has both \begin{align*}
\hbox{$(\tilde g^1_{i_1}(l),\tilde g^2_{i_2}(l),...,\tilde g^m_{i_m}(l))$ and $(\tilde g^1_{j_1}(l),\tilde g^2_{j_2}(l),...,\tilde g^m_{j_m}(l))$ in its support,}
\end{align*} with the corresponding Wasserstein barycenter $\hat \mu_l = \frac 1 2 \tilde \mu_{l}+\frac 1 2 \tilde \mu'_{l} $. Notice that 
\begin{align*}
 \Delta_\lambda \left((\tilde g^1_{i_1}(l),\tilde g^2_{i_2}(l),...,\tilde g^m_{i_m}(l))\right) =x = \Delta_\lambda \left((\tilde g^1_{j_1}(l),\tilde g^2_{j_2}(l),...,\tilde g^m_{j_m}(l)) \right).
\end{align*}
    Because $\Delta_\lambda$ has a Lipschitz inverse, we have
\begin{align*}
| (\tilde g^1_{i_1}(l),\tilde g^2_{i_2}(l),...,\tilde g^m_{i_m}(l)) - (\tilde g^1_{j_1}(l),\tilde g^2_{j_2}(l),...,\tilde g^m_{j_m}(l)) | 
\le C |x-x| =0.
\end{align*}
Now, letting $\alpha$ be such that $j_\alpha \neq i_\alpha$, the above implies that $ \tilde g^\alpha_{i_\alpha}(l) =\tilde g^\alpha_{j_\alpha}(l)$.  Since $\mu_\alpha$ is a weak root measure, this means that, without loss of generality, $$	\lim_{z \rightarrow l^-}(\tilde \mu_\alpha)_{z}(\{\tilde g^\alpha_{j_\alpha}(z)\}) =0.$$  
This contradicts \eqref{eqn: marginals charge graphs} and completes the proof 
 under the additional assumption.

Now, if the assumption fails,
instead of \eqref{eqn: multi-marg charges point}, we can conclude only that $$\tilde \gamma_{l_k}(\tilde g^1_{j_1'}(l_k),\tilde g^2_{j_2}(l_k'),...,\tilde g^m_{j_m}(l_k')) >\epsilon$$ \emph{for some} $(j_1',....,j_m')$ with  $G^\lambda_{j_1'j_2'...j_m'}(l_k) = \tilde G^\lambda_{j_1j_2...j_m}(l_k)$, and by passing to a subsequence if necessary, we can take it to be the same $(j_1',....,j_m')$ for each $k$.  As above, this implies that
$$\tilde \gamma_{l}(\tilde g^1_{j_1'}(l),\tilde g^2_{j_2'}(l),...,\tilde g^m_{j_m'}(l)) \neq 0,
$$ and since $\tilde G^\lambda_{j_1'j_2'...j_m'}(l_k) = \tilde G^\lambda_{j_1j_2...j_m}(l_k)$, passing to the limit implies $\tilde G^\lambda_{j_1'j_2'...j_m'}(l) = \tilde G^\lambda_{j_1j_2...j_m}(l) =\tilde  G^\lambda_{i_1i_2...i_m}(l)$.  The rest of the proof follows exactly as in the special case above.
\end{proof}

\begin{remark}
	If the solution 	$\tilde \gamma_{l}$ to the
	  multi-marginal problem \eqref{eqn: multi-marginal} in the proof above is \emph{unique}, and the $\mu_\alpha$ are all strong root measures, then more is true: \\	  
	  If $\lim_{z \rightarrow l^-}\tilde \mu_{z}(\{\tilde G^\lambda_{j_1j_2,...,j_m}(z)\})\neq 0 $ we actually have $\tilde \mu_{l}(G^\lambda_{i_1i_2,...,i_m}(z)) =0 $ for $z<l$ sufficiently close to $l$.

	 To see this, note that as above, $\tilde \gamma_{l}(\tilde g^1_{j_1}(l),\tilde g^2_{j_2}(l),...,\tilde g^m_{j_m}(l)) \neq 0$.  Since the root measures are strong, we must have $\tilde g^\alpha_{i_\alpha}(l) \neq \tilde g^\alpha_{j_\alpha}(l) $ for the $\alpha$ such that $i_\alpha \neq j_\alpha$.  For $z<l$ with $z$ close to $l$, \emph{any} solution $\tilde \gamma_z$ to the multi-marginal plan \eqref{eqn: multi-marginal} must be weak-$*$ close to $\tilde \gamma_l$ (by uniqueness) and so must satisfy $\tilde \gamma_{z}(\tilde g^1_{j_1}(z),\tilde g^2_{j_2}(z),...,\tilde g^m_{j_m}(z)) \neq 0$.  If such a solution satisfied $\tilde \gamma_{z}(\tilde g^1_{i_1}(z),\tilde g^2_{i_2}(z),...,\tilde g^m_{i_m}(z)) \neq 0$ as well, we would then have, by the Lipschitz property of $\Delta_\lambda^{-1}$,
	 \begin{align*}
	 & | (g^1_{i_1}(z),g^2_{i_2}(z),...,g^m_{i_m}(z)) - (g^1_{j_1}(z),g^2_{j_2}(z),...,g^m_{j_m}(z)) | 
	 \\
	 &\le C\left|G^\lambda_{i_1i_2,...,i_m}(z)-G^\lambda_{j_1j_2,...,j_m}(z)\right|
	 \end{align*}
	 
	 However, this is impossible since the right hand side tends to $0$ as $z$ tends to $l$, but the left hand side does not (as $\tilde g^\alpha_{i_\alpha}(l) \neq \tilde g^\alpha_{j_\alpha}(l) $ for at least one $\alpha$, as described above).  We conclude that we must have  $\tilde \gamma_{z}(\tilde g^1_{i_1}(z),\tilde g^2_{i_2}(z),...,\tilde g^m_{i_m}(z)) = 0$ for any solution to the multi-marginal problem and all $z<l$ sufficiently close to $l$; therefore, $\tilde \mu_z(G^\lambda_{i_1i_2,...,i_m}(z)) =0$ for any Wasserstein barycenter $\tilde \mu_z$ of the $\tilde \mu_1,...\tilde \mu_m$.

	This applies, for instance, when $d=1$, in which case Wasserstein barycenters are always unique.
\end{remark}

\subsection{Comparison with the Wasserstein barycenter}\label{subsect: comparision}

If we instead use the standard notion of the Wasserstein barycenter to interpolate between several root measures, the barycenter may not be a weak root measure, as the following examples show.

 \begin{example} Several constructions of Santambrogio and Wang \cite{SantambrogioWang16} show that displacement interpolation does not generally preserve convexity of sets.  In one of these, the two marginals measures are concentrated on line segments embedded in $\mathbb{R}^2$, while their displacement interpolant (or Wasserstein barycenter) is supported on a curve $y=f(x)$  with a strict local minimum, where $y$ is the vertical direction (see $\mu_{1/2}$ in section 2 in \cite{SantambrogioWang16}).    In our context, the two line segments constitute simple strong skeletal root measures, whereas the displacement interpolant is not even a weak skeletal root measure (as the two limbs meeting at the minimum point $x_0$ of $f$ violate \textbf{W3}). Note that this is precisely because the angle between the two limbs is greater than $\pi/2$, and so the optimal map is not monotone in the vertical direction.
\end{example} 
Our second example is even less well behaved; here we take three strong root measures for which the Wasserstein barycenter has three dimensional support.
\begin{example}\label{ex: high dim barycenter}
	Consider uniform measure on the mutually orthogonal segments $T:=\{(t,t,t): t \in [0,1]\}$, $R:=\{(r,(\frac{-1+\sqrt{3}}{2})r,(\frac{-1-\sqrt{3}}{2})r): r \in [0,1]\}$ and  $S:=\{(s,(\frac{-1-\sqrt{3}}{2})s,(\frac{-1+\sqrt{3}}{2})s): s \in [0,1]\}$ in $\mathbb{R}^3$.
	
	Since the segments are orthogonal,  the interaction terms $x_\alpha \cdot x_\beta =0$ in the Gangbo-Swiech cost $(\mathbb{R}^3)^3$ (with, say, $\lambda_\alpha =1/m$, $m=3$) $\sum_{\alpha ,\beta}\frac{1}{9}|x_\alpha-x_\beta|^2 =- \sum_{\alpha ,\beta=1}^3\frac{4}{9} x_\alpha\cdot x_\beta +\sum_{\alpha=1}^3\frac{4}{9} |x_\alpha|^2$ vanish.
	
	Therefore, any measure with Lebesgue marginals supported on the product space $T \times R \times S$ is optimal in the multi-marginal problem \eqref{eqn: multi-marginal}.   The pushforward of any such measure $\gamma$ by the mapping \begin{eqnarray*}
(t,t,t),(r,(\frac{-1+\sqrt{3}}{2})r,(\frac{-1-\sqrt{3}}{2})r),(s,(\frac{-1-\sqrt{3}}{2})s,(\frac{-1+\sqrt{3}}{2})s)  \mapsto\\ \frac{(t,t,t)+(r,r,-2r)+(r,(\frac{-1+\sqrt{3}}{2})r,(\frac{-1-\sqrt{3}}{2})r)+ (s,(\frac{-1-\sqrt{3}}{2})s,(\frac{-1+\sqrt{3}}{2})s)}{3}  	\end{eqnarray*}
is a Wasserstein barycenter.  If $\gamma$ is, for example, product measure, this push forward is absolutely continuous with respect to Lebesgue measure on $\mathbb{R}^3$.

	This is certainly not a skeletal measure, and cannot be interpreted as a root in any reasonable way.
\end{example} 
As with the layerwise-Wasserstein distance, one might suggest that horizontal symmetrization of the classical Wasserstein distance is more appropriate for comparing root shapes.  That is, one should consider minimizers of \eqref{eqn: symmetrized wass bc}.
In the preceding example, although the Wasserstein barycenter has three dimensional support, the biologically more relevant horizontally symmetrized version is concentrated on a line segment (since after appropriate rotations, the three sample measures are the same).  Below, we augment the sample measures to produce a horizontally symmetrized Wasserstein barycenter with three dimensional support.
\begin{example}
	Let $\mu_1, \mu_2$ and $\mu_3$ be uniform measures on the respective domains $S_i$ defined by:
	\begin{eqnarray*}
	S_1:=&\{(t,t,t): t \in [0,1+\epsilon]\} \\
	S_2:=&\{(t,t,t): t \in [0,1]\} \cup \{(1+t,1+(\frac{-1+\sqrt{3}}{2})t,1+(\frac{-1-\sqrt{3}}{2})t) : t \in [0,\epsilon]\}\\
	S_3:=&\{(t,t,t): t \in [0,1]\} \cup \{(1+t,1+(\frac{-1-\sqrt{3}}{2})t,1+(\frac{-1+\sqrt{3}}{2})t): t \in [0,\epsilon]\}
	\end{eqnarray*}
	  It is not hard to show that the identity rotation minimizes the Wasserstein distance between $\mu_i$ and $R_\#\mu_j$ among horizontal rotations $R$ for sufficiently small $\epsilon$. 
	  
	  Furthermore, the optimal plans between $\mu_i$ and $\mu_j$ couple the top limbs via the identify mappings and the bottom limbs via product measure (or any other coupling between the bottom limbs -- the solution is non-unique).  Therefore, the measure 
	  \begin{eqnarray*}
\gamma =&	&  (Id \times Id \times Id)_\# (\mu_1 |_{\{(t,t,t): t \in [0,1]\}})\\
&+&(\mu_1 |_{\{(t,t,t): t \in [1,1+\epsilon]\}}) \times \left(\mu_2 |_{\{(1+t,1+(\frac{-1+\sqrt{3}}{2})t,1+(\frac{-1-\sqrt{3}}{2})t) : t \in [0,\epsilon]\}}\right)\\
&  &\qquad \qquad  \times \left(\mu_3 |_{ \{(1+t,1+(\frac{-1-\sqrt{3}}{2})t,1+(\frac{-1+\sqrt{3}}{2})t): t \in [0,\epsilon]\}}\right)
	  \end{eqnarray*}
	  is optimal in the multi-marginal problem \eqref{eqn: multi-marginal}, and this plan has minimal cost among all multi-marginal problems with marginals $(\mu_1,R_{2\#}\mu_2, R_{3\#}\mu_3)$ for horizontal rotations $R_2$ and $R_3$.  Consequently the symmetrized Wasserstein barycenter from \eqref{eqn: symmetrized wass bc} is then the pushforward of this measure under the mapping $(x_1,x_2,x_3) \mapsto \frac{x_1+x_2+x_3}{3}$; this consists of 
	  the uniform measure on $\{(t,t,t): t \in [0,1]\}$ and a  measure constructed as in the previous example, with three dimensional support, arising from coupling the three orthogonal lower limbs.
\end{example}

\subsection{Total Root Length}  An important phenotype used by biologists to compare root systems is the total root length, which is well defined for skeletal root systems.  
 
Given a strong skeletal root measure $\mu$ supported on $\cup_{i=1}^{N}{\rm graph}(g_{i})$ on $[0,\bar y]$,  for $\alpha =1,2,...,m$,  the total root length of $\mu$  is simply the one dimensional Hausdorff measure of its support. Letting $\chi_i$ be the indicator function of the domain $[\underline y_i,\overline y_i] \subseteq [0,\bar y ]$ of $g_i$, we note that the root length is
\begin{equation}\label{eqn: root length}
R(\mu) =\sum_{i=1}^{N} \int_0^{\bar y} \sqrt{1+| (g_i)'(y)|^2}\chi_i(y) dy.
\end{equation}

Here we establish a result comparing the total root lengths of several skeletal root systems and their layerwise-Wasserstein barycenter. Given strong skeletal root measures 
\begin{align*}
 \hbox{$\mu_\alpha$ supported on $\cup_{i=1}^{N_{\alpha}}{\rm graph}(g^\alpha_{i_\alpha})$ on $[0,\bar y^\alpha]$,  for $\alpha =1,2,...,m$,}
\end{align*}
 we compare their total root lengths to that of (a selected) layerwise-Wasserstein barycenter, with weights $\lambda_1,...\lambda_m$.  As above, we will also assume two sided bounds, $$0< L\leq f_\alpha^V(y) \leq U< \infty,$$ on each $\mu_\alpha$, where $f_\alpha^V$ is the density of the vertical marginal $\mu_\alpha^V$. We have that $F_{\mu_\alpha}'(y) =f_\alpha^V(y)$, so that this implies that each rescaling change of variables is bi-Lipschitz.

Let $\bar y = \sum_{\alpha=1}^m\lambda_\alpha\bar y^\alpha$, so that any layerwise-Wasserstein barycenter of the $\mu_\alpha$ is supported on $\mathbb{R}^d \times [0,\bar y]$. Assume that each $g^\alpha_{i_\alpha} \in C^1([\underline y_{i_\alpha}^\alpha,\overline y_{i_\alpha}^\alpha])$ and let $C$ be an upper bound on each $| (g^\alpha_{i_\alpha})'|$.  
%
We define the total root length of a layerwise-Wasserstein barycenter $Bar^{LW}(\sum_{\alpha=1}^m\lambda_\alpha\delta_{\mu_\alpha})$ as the one dimensional  Hausdorff measure of  its support, namely, 
the set 
$$
\{(x,y): x \in {\rm spt}(\tilde Bar^{LW}_l(\sum_{\alpha=1}^m\lambda_\alpha\delta_{\mu_\alpha})), l=(\sum_{\alpha=1}^m\lambda_\alpha F_{\mu_\alpha}^{-1})^{-1}(y)\},
$$
 where, as before $\tilde Bar^{LW}_l(\sum_{\alpha=1}^m\lambda_\alpha\delta_{\mu_\alpha})$ is the horizontal slice of the layerwise-Wasserstein barycenter at level $l$ (that is, the Wasserstein barycenter of the $(\tilde \mu_{\alpha})_l$).


Letting $G^\lambda_{i_1....i_m}$ be one of the graphs in the ghost, we let $\chi^\lambda_{i_1....i_m}$ be the indicator function of its active set, that is, 
 
\begin{align*}
 \chi^\lambda_{i_1....i_m}(y) =
\begin{cases}
1     & \text{if $G^\lambda_{i_1....i_m}(y)$ is well defined}\\
& \text{ and in the support of $\tilde Bar^{LW}_{(\sum_{\alpha=1}^m\lambda_\alpha F_{\mu_\alpha}^{-1})^{-1}(y)}(\sum_{\alpha=1}^m\lambda_\alpha\delta_{\mu_\alpha})$}, \\
    0  & \text{otherwise}.
\end{cases}
\end{align*}

 The root length is then
$$
\sum_{i_1,...i_m} \int_0^{\bar y} \sqrt{1+| (G^\lambda_{i_1....i_m})'(y)|^2}\chi^\lambda_{i_1....i_m}(y) dy.
$$

\begin{proposition}\label{prop:root-length}
	Letting $\mu_\alpha$ be skeletal roots for $\alpha =1,2...m$, there is a layerwise -Wasserstein barycenter $
	Bar^{LW}(\sum_{\alpha=1}^m\lambda_\alpha\delta_{\mu_\alpha})$ of $\sum_{\alpha=1}^m\lambda_\alpha\delta_{\mu_\alpha}$ for which
	
	$$
	C_0 R(\mu_\beta) \leq R\left(Bar^{LW}\left(\sum_{\alpha=1}^m\lambda_\alpha\delta_{\mu_\alpha}\right)\right) \leq C_1\left[C_2\sum_{\alpha=1}^m R(\mu_\alpha)-(m-1)\right]
	$$
	for any $\beta =1,2....,m$. The constants $C_0,C_1$ and $C_2$ depend only on $C=\sup_{\alpha, i_\alpha}||(g_{i_\alpha}^\alpha)'||_{L^\infty}$, $L$ and $U$.
\end{proposition}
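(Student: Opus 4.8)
The plan is to prove the upper and lower bounds separately; both rest on describing the support of a conveniently chosen barycenter as a union of at most $\prod_\alpha N_\alpha$ reparametrized ``ghost'' curves, and then combining subadditivity of the one-dimensional Hausdorff measure $\mathcal{H}^1$ with the sparsity of extreme points of the multimarginal transport polytope. First I would fix the barycenter $\mu:=Bar^{LW}(\sum_\alpha\lambda_\alpha\delta_{\mu_\alpha})$: by Proposition~\ref{prop:decomp} its vertical marginal is determined, and by \eqref{eqn:F-LW} the rescaling $T:=F_\mu=(\sum_\alpha\lambda_\alpha F_{\mu_\alpha}^{-1})^{-1}$ satisfies $L\le T'\le U$ because $F_{\mu_\alpha}'=f_\alpha^V\in[L,U]$; and for a.e.\ layer $l$ I take $\tilde\mu_l=(\Delta_\lambda)_\#\tilde\gamma_l$, where $\tilde\gamma_l$ is an \emph{extreme point} of the (compact, convex) set of optimal plans of \eqref{eqn: multi-marginal} with marginals $(\tilde\mu_\alpha)_l$, selected measurably in $l$. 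Writing $n_\alpha(l):=\#\{i_\alpha:l\in[\underline l^\alpha_{i_\alpha},\overline l^\alpha_{i_\alpha}]\}$ for the number of atoms of $(\tilde\mu_\alpha)_l$, such an extreme point is supported on at most $\sum_\alpha n_\alpha(l)-(m-1)$ points, each of the form $(\tilde g^1_{i_1}(l),\dots,\tilde g^m_{i_m}(l))$ --- I call the corresponding tuple \emph{active at $l$} --- so that, applying $\Delta_\lambda$, $\mathrm{spt}(\tilde\mu_l)=\{\tilde G^\lambda_{i_1\cdots i_m}(l):(i_1,\dots,i_m)\ \text{active at}\ l\}$ has at most $\sum_\alpha n_\alpha(l)-(m-1)$ points. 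I would also record the elementary estimates $|(\tilde g^\alpha_{i_\alpha})'|\le C/L$, $|(\tilde G^\lambda_{i_1\cdots i_m})'|\le C/L$, $|(G^\lambda_{i_1\cdots i_m})'|\le CU/L$, all from $\tilde g^\alpha_{i_\alpha}=g^\alpha_{i_\alpha}\circ F_{\mu_\alpha}^{-1}$ and the bounds on $f_\alpha^V$ and $|(g^\alpha_{i_\alpha})'|$.

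Next I would establish the structure of the support. Since $\tilde G^\lambda_{i_1\cdots i_m}(l)=\Delta_\lambda(\tilde g^1_{i_1}(l),\dots,\tilde g^m_{i_m}(l))$ and $\Delta_\lambda$ is injective on $\mathrm{spt}(\tilde\gamma_l)$ (the Lipschitz-inverse fact from \cite{KimPass17} already used in Lemma~\ref{lem:W3}), two distinct tuples that are active at the same level $l$ give distinct points of $\mathrm{spt}(\tilde\mu_l)$; combined with the strong no-crossing property \textbf{S3} (which forces $\tilde g^\alpha_{i_\alpha}(l)\ne\tilde g^\alpha_{j_\alpha}(l)$ whenever $i_\alpha\ne j_\alpha$ and both limbs are defined at $l$, except at a common endpoint), this makes the set of levels at which two distinct tuples are simultaneously active and their ghost curves coincide finite. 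Hence, writing $\tilde A_{i_1\cdots i_m}:=\{l:(i_1,\dots,i_m)\ \text{active at}\ l\}$, the set $\{(x,l):x\in\mathrm{spt}(\tilde\mu_l)\}$ equals $\bigcup_{(i_1,\dots,i_m)}\mathrm{graph}(\tilde G^\lambda_{i_1\cdots i_m}|_{\tilde A_{i_1\cdots i_m}})$, a union of finitely many $C^1$ graph-pieces that are pairwise disjoint up to an $\mathcal{H}^1$-null set; un-rescaling by $T^{-1}$ gives the analogous description of the set whose $\mathcal{H}^1$-measure is, by definition, $R(\mu)$, and confirms that it equals the displayed sum-integral formula.

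For the upper bound, the graph area formula, subadditivity of $\mathcal{H}^1$, and $|(G^\lambda_{i_1\cdots i_m})'|\le CU/L$ give
\[
R(\mu)\ \le\ \sqrt{1+(CU/L)^2}\ \sum_{(i_1,\dots,i_m)}|\{y:T(y)\in\tilde A_{i_1\cdots i_m}\}|\ =\ \sqrt{1+(CU/L)^2}\int_0^{\bar y}\#\{\text{tuples active at }T(y)\}\,dy,
\]
and $\#\{\text{tuples active at }l\}\le\#\,\mathrm{spt}(\tilde\gamma_l)\le\sum_\alpha n_\alpha(l)-(m-1)$; changing variables $l=T(y)$ (so $dy=(T^{-1})'(l)\,dl\le\tfrac1L\,dl$) and then, after a change of variables back to the $y$-coordinate using $f_\alpha^V\le U$, bounding $\int_0^1 n_\alpha(l)\,dl=\sum_{i_\alpha}(\overline l^\alpha_{i_\alpha}-\underline l^\alpha_{i_\alpha})\le U\,R(\mu_\alpha)$, yields the upper bound with $C_1=\sqrt{1+(CU/L)^2}/L$ and $C_2=U$ (the bracket stays positive since $\int_0^1 n_\alpha\ge1$). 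For the lower bound I fix $\beta$ and, on the graph-piece associated to a tuple $(i_1,\dots,i_m)$, consider the map $(x,l)\mapsto(\tilde g^\beta_{i_\beta}(l),l)$: it is Lipschitz with constant $\sqrt{1+(C/L)^2}$, it maps into $\mathrm{graph}(\tilde g^\beta_{i_\beta})\subset\mathrm{spt}(\tilde\mu_\beta)$, and --- since $(\pi_\beta)_\#\tilde\gamma_l=(\tilde\mu_\beta)_l$, so every atom $\tilde g^\beta_{i_\beta}(l)$ of $(\tilde\mu_\beta)_l$ is the $\beta$-th coordinate of a point of $\mathrm{spt}(\tilde\gamma_l)$, hence of an active tuple --- the union over all tuples of the images of these maps is exactly $\mathrm{spt}(\tilde\mu_\beta)=\bigcup_{i_\beta}\mathrm{graph}(\tilde g^\beta_{i_\beta})$. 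Subadditivity of $\mathcal{H}^1$ then gives $\mathcal{H}^1(\mathrm{spt}(\tilde\mu_\beta))\le\sqrt{1+(C/L)^2}\sum_{(i_1,\dots,i_m)}\mathcal{H}^1(\mathrm{graph}(\tilde G^\lambda_{i_1\cdots i_m}|_{\tilde A_{i_1\cdots i_m}}))$, and un-rescaling both sides by the bi-Lipschitz maps $T$ and $F_{\mu_\beta}$ (constants controlled by $L,U$) yields $R(\mu_\beta)\le K\,R(\mu)$ with $K=K(C,L,U)$, i.e.\ $C_0=1/K$.

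The main obstacle is the second step --- proving that the support of the selected barycenter is, up to an $\mathcal{H}^1$-null set, a disjoint union of at most $\prod_\alpha N_\alpha$ reparametrized $C^1$ ghost curves, with at most $\sum_\alpha n_\alpha(l)-(m-1)$ of them meeting each level slice. This is where one needs the measurable selection of extreme-point optimal plans $l\mapsto\tilde\gamma_l$, the injectivity of $\Delta_\lambda$ on $\mathrm{spt}(\tilde\gamma_l)$ from \cite{KimPass17}, the strong no-crossing condition \textbf{S3}, and the weak-$*$ continuity hypothesis on $l\mapsto(\tilde\mu_\alpha)_l$ (to relate $R(\mu)$ to this graph-piece decomposition and to run the limiting arguments at coincidence levels as in Lemma~\ref{lem:W3}). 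Everything else reduces to elementary arc-length estimates and one-dimensional changes of variables.
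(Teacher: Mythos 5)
Your proposal is correct and follows essentially the same route as the paper: the paper also selects, at each layer, a Wasserstein barycenter whose support has at most $\sum_\alpha S_\alpha(l)-(m-1)$ points (citing Anderes--Borgwardt--Miller, which is the same transportation-polytope sparsity fact you invoke via extreme points), bounds the ghost-curve derivatives, and transfers the resulting rescaled inequalities back through the bi-Lipschitz maps $F_{\mu_\alpha}$ and $(\sum_\alpha\lambda_\alpha F_{\mu_\alpha}^{-1})^{-1}$. Your lower bound via Lipschitz images covering $\mathrm{spt}(\tilde\mu_\beta)$ is just a repackaging of the paper's slicewise count $S(l)\ge S_\beta(l)$, and your extra care about coincidence levels (via \textbf{S3} and the injectivity of $\Delta_\lambda$ on the support of the optimal plan) addresses a point the paper leaves implicit.
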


The proof essentially consists of two steps: first, we establish a similar result for the rescaled versions.  We then use bounds on the rescaling change of variables to translate the rescaled inequalities back to the original coordinates.  
We isolate the first step as a separate lemma.
\begin{lemma}\label{lem:rescaled-root-length}
	Using the notation in the Proposition above, there exists a layerwise Wasserstein barycenter such that
	$$
	\tilde C_0 R(\tilde \mu_\beta) \leq R\left(\tilde Bar^{LW}\left(\sum_{\alpha=1}^m\lambda_\alpha\delta_{\mu_\alpha}\right)\right) \leq \tilde C_1\left[\sum_{\alpha=1}^mR(\tilde \mu_\alpha)-(m-1)\right]
	$$
	for any $\beta =1,2....,m$.  
\end{lemma}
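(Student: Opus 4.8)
The plan is to reduce both inequalities to pointwise (in the rescaled height $l\in[0,1]$) counts of the number of atoms of the barycenter's slices, and then integrate. Write $\tilde g^\alpha_{i_\alpha}$ for the rescaled limbs of $\tilde\mu_\alpha$, with rescaled domains $[\underline l^\alpha_{i_\alpha},\overline l^\alpha_{i_\alpha}]\subseteq[0,1]$, write $\tilde G^\lambda_{i_1\dots i_m}=\sum_\alpha\lambda_\alpha\tilde g^\alpha_{i_\alpha}$ for the rescaled ghost curves, and set
$$
k_\alpha(l):=\#\{i_\alpha : l\in[\underline l^\alpha_{i_\alpha},\overline l^\alpha_{i_\alpha}]\}=\#\mathrm{spt}\big((\tilde\mu_\alpha)_l\big),
$$
the number of limbs of $\tilde\mu_\alpha$ active at height $l$ (recall that $(\tilde\mu_\alpha)_l$ is a finite convex combination of Diracs located at the $\tilde g^\alpha_{i_\alpha}(l)$). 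By {\bf S1}--{\bf S2}, $\mathrm{spt}(\tilde\mu_\alpha)=\cup_{i_\alpha}\mathrm{graph}(\tilde g^\alpha_{i_\alpha})$ is connected with full vertical projection $[0,1]$, so $\cup_{i_\alpha}[\underline l^\alpha_{i_\alpha},\overline l^\alpha_{i_\alpha}]=[0,1]$ and $k_\alpha(l)\ge1$ a.e. Put $\tilde C:=\sup_{\alpha,i_\alpha}\|(\tilde g^\alpha_{i_\alpha})'\|_{L^\infty}\le C/L$ and $K:=\sqrt{1+\tilde C^{\,2}}$. Since $|(\tilde G^\lambda_{i_1\dots i_m})'|\le\tilde C$ as well, and $1\le\sqrt{1+|v|^2}\le K$ whenever $|v|\le\tilde C$, the defining formulas for the rescaled root lengths give the elementary sandwich estimates
$$
\int_0^1 k_\beta(l)\,dl\ \le\ R(\tilde\mu_\beta)\ \le\ K\int_0^1 k_\beta(l)\,dl,
$$
$$
\int_0^1 N(l)\,dl\ \le\ R\big(\tilde Bar^{LW}\big)\ \le\ K\int_0^1 N(l)\,dl,\qquad N(l):=\#\mathrm{spt}(\tilde\mu_l),
$$
the last upper bound using that, for a.e.\ $l$, the ghost curves meeting the barycenter slice are in bijection with its atoms, so the sum-over-tuples formula for $R(\tilde Bar^{LW})$ is comparable to $\int_0^1 N(l)\,dl$. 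Everything then reduces to the pointwise inequalities $k_\beta(l)\le N(l)$ and, for a well-chosen barycenter, $N(l)\le\sum_\alpha k_\alpha(l)-(m-1)$.

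For the lower bound, let $\tilde\gamma_l$ solve the multi-marginal problem \eqref{eqn: multi-marginal}, so $\tilde\mu_l=(\Delta_\lambda)_\#\tilde\gamma_l$. By \cite{KimPass17} the map $\Delta_\lambda$ is injective on $\mathrm{spt}(\tilde\gamma_l)$, whence $N(l)=\#\mathrm{spt}(\tilde\gamma_l)$; and the coordinate projection $\pi_\beta$ maps $\mathrm{spt}(\tilde\gamma_l)$ onto $\mathrm{spt}((\tilde\mu_\beta)_l)$, since for finitely supported measures the support of a pushforward is the image of the support. Hence $k_\beta(l)\le N(l)$, and the sandwich estimates give $R(\tilde Bar^{LW})\ge\int_0^1 N(l)\,dl\ge\int_0^1 k_\beta(l)\,dl\ge\frac{1}{K}R(\tilde\mu_\beta)$; so $\tilde C_0=1/K$ works, for \emph{every} layerwise barycenter.

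For the upper bound we must \emph{choose} the barycenter. For a.e.\ $l$ the set of optimal $\tilde\gamma_l$ is a nonempty compact face of the transportation polytope $\Pi\big((\tilde\mu_1)_l,\dots,(\tilde\mu_m)_l\big)$, hence contains a vertex of that polytope; and a vertex of a transportation polytope with marginal sizes $k_1,\dots,k_m$ is supported on at most $\sum_\alpha k_\alpha-(m-1)$ points (the marginal constraint system has rank $\sum_\alpha k_\alpha-(m-1)$). Selecting such an optimal vertex $\tilde\gamma_l$ measurably in $l$ and setting $\tilde\mu_l:=(\Delta_\lambda)_\#\tilde\gamma_l$, together with a Wasserstein barycenter of the vertical marginals, produces via Proposition~\ref{prop:decomp} a layerwise-Wasserstein barycenter with $N(l)\le\sum_\alpha k_\alpha(l)-(m-1)$ a.e. Integrating, using the sandwich estimates and $\int_0^1 k_\alpha(l)\,dl\le R(\tilde\mu_\alpha)$ (with $\int_0^1 k_\alpha(l)\,dl\ge1$, so the bracket is nonnegative),
$$
R(\tilde Bar^{LW})\ \le\ K\int_0^1 N(l)\,dl\ \le\ K\Big[\sum_{\alpha=1}^m\int_0^1 k_\alpha(l)\,dl-(m-1)\Big]\ \le\ K\Big[\sum_{\alpha=1}^m R(\tilde\mu_\alpha)-(m-1)\Big],
$$
so $\tilde C_1=K$ works.

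The counting itself is elementary; the delicate points are bookkeeping. First, one must pick an optimal vertex multi-marginal plan in each layer \emph{measurably} in $l$, so the slices assemble into an honest layerwise barycenter — this should follow from a standard measurable-selection theorem, as the atoms $\tilde g^\alpha_{i_\alpha}(l)$, the active index sets, and the cost matrix depend measurably (indeed piecewise continuously) on $l$. Second, one must justify that the sum-over-tuples expression defining $R(\tilde Bar^{LW})$ really computes the one-dimensional Hausdorff measure of the support, i.e.\ that distinct ghost curves both meeting the barycenter coincide only on $l$-null sets, so that no overcounting occurs and the replacement by $\int_0^1 N(l)\,dl$ up to the bounded weights $\sqrt{1+|\cdot|^2}$ is legitimate. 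For strong skeletal root measures this non-overlap can be extracted from the non-crossing analysis already carried out in Lemma~\ref{lem:W3} and its strong-root refinement: at a height where two ghost curves would cross, the conditional mass of the barycenter cannot charge both of them. I expect this reconciliation, rather than the counting, to be the main obstacle.
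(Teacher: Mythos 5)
Your proof is correct and follows essentially the same route as the paper's: both arguments reduce each inequality to a layerwise count of support points, bound the barycenter slice's atom count above by $\sum_\alpha S_\alpha(l)-(m-1)$ via a sparse optimal multi-marginal plan (the paper cites Anderes--Borgwardt--Miller for this, where you re-derive it from the vertex structure of the transportation polytope), bound it below by each marginal's count, and then sandwich the root lengths between $\int_0^1 \#\mathrm{spt}\,dl$ and $\sqrt{1+\tilde C^2}$ times that integral. The measurable-selection and non-overcounting caveats you flag are real but are left implicit in the paper as well.
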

\begin{proof}
	Note that $\tilde \mu_\alpha$ is supported on the skeletal set $\cup_{i=1}^{N_{\alpha}}{\rm }(\tilde g^\alpha_{i_\alpha})$, where $\tilde g^\alpha_{i_\alpha}:=g^\alpha_{i_\alpha} \circ F_{\mu_\alpha}^{-1}$. The $\tilde g^\alpha_{i_\alpha}$ then have derivatives bounded by $\tilde C =C/L$.
	
	The ghost of the rescaled system consists of the limbs $\tilde G^\lambda_{i_1....i_m}=\sum_{\alpha=1}^{m}\lambda_\alpha \tilde g^\alpha_{i_1}$, which inherit the same derivative bounds as the $\tilde g^\alpha_{i_\alpha}$, $|(\tilde G^\lambda_{i_1....i_m})'| \leq \tilde C$, and at each $l \in[0,1]$ it is shown in \cite{AnderesBogwardtMiller16}  that there is a Wasserstein barycenter $Bar^W(\sum_{\alpha}\lambda_\alpha\delta_{(\tilde\mu_\alpha)_l})$ of the discrete measures $(\tilde \mu_\alpha)_l$ such that the number $S(l)$ of points in its support is at most 
	$\sum_{\alpha=1}^mS_\alpha(y) -m+1$ , where $S_\alpha(l)$ is the number of points in the support of  $(\tilde \mu_\alpha)_l$.  We use this Wasserstein barycenter in our construction of $\tilde Bar^{LW}(\sum_{\alpha=1}^m\lambda_\alpha\delta_{\mu_\alpha})$.  Therefore,
	\begin{eqnarray*}
	& & 	R\left(\tilde Bar^{LW}\left(\sum_{\alpha=1}^m\lambda_\alpha\delta_{\mu_\alpha}\right)\right)\\&=&\sum_{i_1,...i_m} \int_0^{1} \sqrt{1+| (\tilde G^\lambda_{i_1....i_m})'(l)|^2}\chi^\lambda_{i_1....i_m}(l) dl\\
		&\leq&\int_0^{1}\sqrt{1+\tilde C^2}\left[\sum_{\alpha=1}^mS_\alpha(l) -m+1\right]dl\\
		&\leq&\int_0^{1}\sum_{\alpha=1}^m\sum_{i_\alpha =1}^{N_\alpha} \sqrt{1+| (\tilde g^{\alpha}_{i_\alpha})'(l)}|^2\sqrt{1+\tilde C^2}\chi_{i_\alpha}^\alpha(l)dl-\sqrt{1+\tilde C^2}(m-1)\\
		&=&\sqrt{1+\tilde C^2}\sum_{\alpha=1}^mR(\tilde \mu_\alpha)-\sqrt{1+\tilde C^2}(m-1)
	\end{eqnarray*}	
	Similarly, the $S(l)$ is bounded below by the support of each marginal, $S(l) \geq S_\beta(l)$, and so, for each $\beta$
	\begin{eqnarray*}
		R\left(\tilde Bar^{LW}\left(\sum_{\alpha=1}^m\lambda_\alpha\delta_{\mu_\alpha}\right)\right) &=&\sum_{i_1,...i_m} \int_0^{1} \sqrt{1+| (\tilde G^\lambda_{i_1....i_m})'(l)|^2}\chi^\lambda_{i_1....i_m}(l) dl\\
		&\geq&\int_0^{1}S_\beta(l)dl\\
		&\geq&\int_0^{1}\sum_{i =1}^{N_\beta}\frac{ \sqrt{1+| (\tilde g^{\beta}_i)'(l)}|^2}{\sqrt{1+\tilde C^2}}\chi_{i}^\beta(l)dl\\
		&=&\frac{1}{\sqrt{1+\tilde C^2}}R(\tilde \mu_\beta).
	\end{eqnarray*}.
\end{proof}

The proof of the proposition combines the lemma with straightforward estimates on the change of variables $F_{\mu_\alpha}$.
\begin{proof}[Proof of Proposition~\ref{prop:root-length}]
	The root length of each limb  satisfies:
	\begin{eqnarray*}
		\int_{\underline y_i}^{\overline y_i}\sqrt{1+[(g_{i_\alpha}^\alpha)'(y)]^2} dy &=&\int_{\underline l_i}^{\overline l_i}\sqrt{1+[(g_{i_\alpha}^\alpha)'(F_{\mu_\alpha}^{-1}(l))]^2} [F_{\mu_\alpha}^{-1}]'(l)dl\\
		&=&\int_{\underline l_i}^{\overline l_i}\sqrt{[F_{\mu_\alpha}^{-1}]'(l)^2+[(g_{i_\alpha}^\alpha)'(F_{\mu_\alpha}^{-1}(l))]^2[F_{\mu_\alpha}^{-1}]'(l)^2} dl\\
		&\leq&K\int_{\underline l_i}^{\overline l_i}\sqrt{[1+[(g_{i_\alpha}^\alpha)'(F_{\mu_\alpha}^{-1}(l))]^2[F_{\mu_\alpha}^{-1}]'(l)^2} dl\\
	\end{eqnarray*}
	where $K=\max(\frac{1}{L},1)$.  The last term corresponds to the root length of the corresponding limb of $\tilde \mu_\alpha$.  Adding over all limbs we get
	$$
	R(\mu_\alpha) \leq KR(\tilde \mu_\alpha),
	$$
	while a symmetric argument yields 
	
	$$
	R(\mu_\alpha) \geq kR(\tilde \mu_\alpha),
	$$
	with $k=\min (1/U,1)$. 
	
	Similarly, since the vertical rescaling for the barycenter $$F_{Bar^{LW}\left(\sum_{\alpha}\lambda_\alpha \delta_{\mu_\alpha}\right)}=(\sum_{\alpha =1}^m\lambda_\alpha F_{\mu_\alpha}^{-1})^{-1}$$ inherits first derivative bounds from the $\mu_i$, we also get 
	$$
	kR\left(\tilde Bar^{LW}(\sum_{\alpha=1}^m\lambda_\alpha\delta_{\mu_\alpha})\right)\leq R\left(Bar^{LW}(\sum_{\alpha=1}^m\lambda_\alpha\delta_{\mu_\alpha})\right) \leq KR\left(\tilde Bar^{LW}(\sum_{\alpha=1}^m\lambda_\alpha\delta_{\mu_\alpha})\right).
	$$
	
	Combined with the Lemma~\ref{lem:rescaled-root-length}, these estimates yield the desired result.
\end{proof}
\begin{remark}
The result also holds, with essentially the same proof, for weak skeletal root measures, provided we take  $\chi_i^\alpha$ in \eqref{eqn: root length} to be the indicator function of the subset of the domain $[\underline y_i^\alpha, \overline y_i^\alpha]$ where $(\tilde \mu_\alpha)_l(g_{i_\alpha}^\alpha(y)) >0$, for $l=F_\mu(y)$.  
\end{remark}

\begin{remark} 	
 It is unfortunately not possible to establish an upper bound on the root length of the layerwise Wasserstein barycentre which is \emph{independent} of the number of samples $m$.
 
 To see this, consider the skeletal root measures $\mu_\alpha$, each concentrated on two curves $g_{\alpha_1}, g_{\alpha_2}: [0,1]\rightarrow \mathbb{R}$, with $g_{\alpha_1}(y)=0$ and $g_{\alpha_2}(y)=y$..  We let the one dimensional density of each $\mu_i$ be constant on each of the two limbs, with densities $\frac{1}{\alpha}$ on $g_{\alpha_1}$ and $1-\frac{1}{\alpha}$ on $g_{\alpha_2}$ (normalized to have total mass $1$).  The vertical marginals of each $\mu_\alpha$ are then uniform, so $ F_{\mu_\alpha}(y)=y$ and each $(\tilde \mu_{\alpha})_l=\frac{1}{\alpha}\delta_0 +(1-\frac{1}{\alpha})\delta_l$.
 
 It is then not hard to see that the Wasserstein barycenter of $\sum_{\alpha=1}^{m}\frac{1}{m}(\tilde \mu_{\alpha})_l$ is then concentrated on the $m+1$ points $0,\frac{l}{m}, \frac{2l}{m},...l$, and so the support of the layerwise-Wasserstein barycenter of $\sum_{\alpha=1}^{m}\frac{1}{m} \mu_{\alpha}$  consists of the $m$ curves $g_1,...,g_m: [0,1]\rightarrow \mathbb{R}$, with $g_\alpha(y) =\frac{\alpha y}{m}$; the total root length clearly grows with $m$.
 
Interpolating between a large number of marginals, or samples, $m$, can therefore result in weak skeletal measures with very large total root length,

\end{remark}

\section{Layerwise Wasserstein convexity}\label{sec:displacementconvexity}
 We will call a function $\mathcal F: P(\mathbb{R}^d \times \R_{\ge 0})\rightarrow \mathbb{R}$  \emph{layerwise- Wasserstein 
	convex} if for any $\Omega \in P(P(\R^d \times \R_{\ge 0}))$, 
	\begin{align*}
	\mathcal{F} \left( Bar^{LW}(\Omega) \right) \le \int \mathcal{F}(\mu) d\Omega(\mu).  
	\end{align*} This notion of convexity 
	 may potentially play an important role in applications.  Given a family of root systems, corresponding to a family of  genetically identical plants, grown under identical environmental conditions, we will in forthcoming work propose interpreting the layerwise-Wasserstein barycenter of the systems as the single root system which best represents the family.  It is natural to compare phenotypes (for instance, center of mass, variance, entropy, total root length, etc.) of that barycenter with the phenotypes of the actual observed roots in the original family.  If these phenotypes (interpreted as functionals on the space of measures) are layerwise-Wasserstein convex, 
	 the phenotype of the barycenter is always less than the average of the phenotypes of the samples. 

The theory of layerwise convexity, which we begin to develop below, has a strong connection to the theory of displacement convexity, or convexity along geodesics with respect to the Wasserstein metric, introduced by McCann \cite{McCann97}, and its extension to convexity over Wasserstein barycenters, introduced by Agueh-Carlier \cite{AguehCarlier11}.  

We begin with the Shannon entropy, perhaps the best known  displacement convex functional.  As we show below, it is also layerwise-Wasserstein convex.

For roots, it can be regarded as a measure of  the concentration of mass and therefore has potential biological interest. 
	Given $\mu \in P(\R^{d}\times\R_{\ge 0})$, with $$\mu(x,y) = f(x,y)dxdy, \hbox{ where $x \in \mathbb{R}^n$, $y \in \mathbb{R}_{\ge 0}$,}$$  the vertical marginal $\mu^V$ has density $$f^V(y) = \int_{\mathbb{R}}f(x,y)dx.$$  Note that for fixed $y$, the probability measure $$d\mu_y(x) = \frac{f(x,y)}{f^V(y)}dx$$ coincides with $\tilde \mu_l$ for $l=F_\mu(y)$.
Recall that the Shannon entropy of $\mu$ is defined as $$S(\mu) = \int_{\mathbb{R}^n} \int_{\mathbb{R}_{\geq 0}}f(x,y)\log f(x,y)dxdy.$$
This formula allows one to rewrite $S(\mu)$ using the layerwise decomposition.
\begin{proposition}
	The Shannon entropy $S(\mu)$ satisfies:
	\begin{align*}
	S(\mu)&= \int_{\mathbb{R}_{\geq 0}}S(\mu_y)d\mu^V(y) +S(\mu^V)\\
	&=\int_{0}^1S(\tilde \mu_{l})dl +S(\mu^V).
	\end{align*}	
\end{proposition}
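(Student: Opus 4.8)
The plan is to start directly from the defining formula $S(\mu)=\int f\log f\,dx\,dy$ and factor the density through the disintegration. Write $h_y(x):=f(x,y)/f^V(y)$, so that $h_y$ is precisely the density of the conditional measure $\mu_y$ (and of $\tilde\mu_l$ for $l=F_\mu(y)$), and $f(x,y)=f^V(y)\,h_y(x)$. Taking logarithms gives $\log f(x,y)=\log f^V(y)+\log h_y(x)$ wherever $f^V(y)>0$ (the set where $f^V(y)=0$ carries no $\mu$-mass, so it can be ignored). Substituting this into the integral and splitting it into two pieces is the heart of the argument.

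For the first piece, I would use $\int_{\mathbb{R}^n} f(x,y)\,dx=f^V(y)$ to get
\begin{align*}
\int_{\mathbb{R}_{\ge 0}}\int_{\mathbb{R}^n} f(x,y)\log f^V(y)\,dx\,dy
=\int_{\mathbb{R}_{\ge 0}} f^V(y)\log f^V(y)\,dy = S(\mu^V).
\end{align*}
For the second piece, I would write $f(x,y)=f^V(y)h_y(x)$ and recognize the inner integral as the entropy of $\mu_y$:
\begin{align*}
\int_{\mathbb{R}_{\ge 0}}\int_{\mathbb{R}^n} f(x,y)\log h_y(x)\,dx\,dy
=\int_{\mathbb{R}_{\ge 0}} f^V(y)\Big(\int_{\mathbb{R}^n} h_y(x)\log h_y(x)\,dx\Big)dy
=\int_{\mathbb{R}_{\ge 0}} S(\mu_y)\,d\mu^V(y),
\end{align*}
using $d\mu^V(y)=f^V(y)\,dy$. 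Adding the two pieces yields the first claimed identity.

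For the second identity, I would apply the change of variables $l=F_\mu(y)$. Since $\mu$ (hence $\mu^V$) is absolutely continuous, $F_\mu$ is continuous with $F_\mu'(y)=f^V(y)$ and $(F_\mu)_\#\mu^V=\mathcal L^1$ on $[0,1]$ (as $|\mu|=1$); moreover $\mu_y=\tilde\mu_{F_\mu(y)}$ by construction. Therefore $\int_{\mathbb{R}_{\ge 0}}S(\mu_y)\,d\mu^V(y)=\int_0^1 S(\tilde\mu_l)\,dl$, which is exactly the passage from the first to the second line. I expect the only real subtlety to be bookkeeping about integrability and signs: the manipulations above are an application of Fubini--Tonelli, and one should note either that $S(\mu)$ is assumed finite (so all three integrals are finite and the splitting is legitimate), or argue by the usual decomposition into positive and negative parts, handling the region where $f<1$ separately so that no ill-defined $\infty-\infty$ arises; on the support of a compactly supported measure one may also simply split off the bounded part. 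This is the step requiring the most care, but it is routine.
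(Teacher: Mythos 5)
Your proposal is correct and follows essentially the same route as the paper's proof: factor the density as $f(x,y)=f^V(y)\,\mu_y(x)$, split the logarithm, identify the two resulting integrals as $\int S(\mu_y)\,d\mu^V(y)$ and $S(\mu^V)$, and pass to the second line via the change of variables $l=F_\mu(y)$ with $F_\mu'=f^V$. Your added care about integrability and the $\infty-\infty$ issue is a reasonable refinement of what the paper handles only via the convention $0\log 0=0$.
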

\begin{proof}
	The proof is a calculation. 
	In the following we use the  standard convention that $0\log(0) =0$.
		We have
	\begin{eqnarray*}
		S(\mu) &=& \int_{\mathbb{R}_{\geq 0}} \int_{\mathbb{R}^n}f(x,y)\log[ f(x,y)f^V(y)/f^V(y)]dxdy\\
		&=&\int_{\mathbb{R}_{\geq 0}} \int_{\mathbb{R}^n}f(x,y)\left[\log\left[ \frac{f(x,y)}{f^V(y)}\right]+\log f^V(y)\right]dxdy\\
		&=&\int_{\mathbb{R}_{\geq 0}} \int_{\mathbb{R}^n}f(x,y)\log\left[ \frac{f(x,y)}{f^V(y)}\right]dxdy+ \int_{\mathbb{R}}\Big(\int_{\mathbb{R}^n}f(x,y)dx \Big)\log f^V(y)dy\\
		&=&\int_{\mathbb{R}_{\geq 0}} \int_{\mathbb{R}^n}f^V(y)\frac{f(x,y)}{f^V(y)}\log\left[ \frac{f(x,y)}{f^V(y)}\right]dxdy+ \int_{\mathbb{R}}f^V(y)\log f^V(y)dxdy\\
		&=&\int_{\mathbb{R}_{\geq 0}} \int_{\mathbb{R}^n}\Big(\frac{f(x,y)}{f^V(y)}\log\left[ \frac{f(x,y)}{f^V(y)}\right]dx\Big)f^V(y)dy+ S(\mu^V)\\
		&=&\int_{\mathbb{R}_{\geq 0}} S(\mu_y)d\mu^V(y)+ S(\mu^V).
	\end{eqnarray*}
	The final equality follows by noting that the cumulative distribution function $F_{\mu}$ satisfies $F_{\mu}'(y) =f^V(y)$ and changing variables from $y$ to $l=F_{\mu}(y)$.
\end{proof}

\begin{corollary}
	The Shannon entropy is layerwise-Wasserstein convex.
\end{corollary}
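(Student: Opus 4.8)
The plan is to combine the layerwise decomposition of the Shannon entropy just established with two known convexity facts: the displacement convexity (in the Agueh--Carlier barycentric sense) of the Shannon entropy on $P(\mathbb{R}^d)$, and the convexity of $S$ on $P(\mathbb{R}_{\geq 0})$ in the same sense. First I would recall, from Proposition~\ref{prop:decomp}, that if $\mu = Bar^{LW}(\Omega)$ then $\mu^V$ is a Wasserstein barycenter of $\Omega^V$ and, for a.e.\ $l$, $\tilde\mu_l$ is a Wasserstein barycenter of $\tilde\Omega_l$. Then, applying the entropy decomposition to both $\mu$ and to each sample $\nu$ in the support of $\Omega$, the inequality $S(Bar^{LW}(\Omega)) \le \int S(\nu)\,d\Omega(\nu)$ reduces to the two inequalities
\begin{align*}
S(\mu^V) \le \int S(\nu^V)\,d\Omega(\nu), \qquad \int_0^1 S(\tilde\mu_l)\,dl \le \int \int_0^1 S(\tilde\nu_l)\,dl\,d\Omega(\nu).
\end{align*}

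For the first inequality, I would invoke the fact (Agueh--Carlier \cite{AguehCarlier11}, building on McCann \cite{McCann97}) that the Shannon entropy is convex along Wasserstein barycenters: since $\mu^V = Bar^W(\Omega^V)$, we get $S(\mu^V) \le \int_{P(\mathbb{R}_{\geq 0})} S(\alpha)\,d\Omega^V(\alpha) = \int S(\nu^V)\,d\Omega(\nu)$, the last equality by definition of $\Omega^V$ as the pushforward of $\Omega$ under $\nu \mapsto \nu^V$. For the second, I would apply the same convexity fact slicewise: for a.e.\ $l$, $\tilde\mu_l = Bar^W(\tilde\Omega_l)$, so $S(\tilde\mu_l) \le \int_{P(\mathbb{R}^d)} S(\beta)\,d\tilde\Omega_l(\beta) = \int S(\tilde\nu_l)\,d\Omega(\nu)$; integrating over $l \in [0,1]$ and using Fubini to swap the $l$-integral with the $\Omega$-integral on the right gives exactly the claimed bound. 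Adding the two inequalities and using the decomposition in the reverse direction yields the result.

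The main obstacle is not any single estimate but the measurability and integrability bookkeeping needed to make the slicewise argument rigorous: one must ensure that $l \mapsto S(\tilde\mu_l)$ and $(\l,\nu) \mapsto S(\tilde\nu_l)$ are measurable (so the Fubini step is legitimate), handle the possibility that some entropies are $+\infty$ (in which case the right-hand side is $+\infty$ and the inequality is trivial) or fail to be bounded below in a way that obstructs integration, and confirm that the decomposition of $S$ genuinely holds for the relevant $\mu$ and $\nu$ — in particular that the disintegrations are absolutely continuous for a.e.\ slice. Since the corollary is stated without extra hypotheses, I would either implicitly restrict to $\Omega$ supported on measures with finite entropy (where everything is finite) or note that in all other cases the inequality degenerates. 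Modulo these standard but slightly delicate points, the proof is essentially the two-line reduction above followed by an appeal to the established barycentric convexity of the Shannon entropy in fixed dimension.
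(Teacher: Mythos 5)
Your proposal is correct and follows essentially the same route as the paper: decompose $S(\mu)$ via the preceding proposition into $S(\mu^V)$ plus the integral of $S(\tilde\mu_l)$ over layers, then apply the known barycentric (Wasserstein) convexity of the entropy to the vertical marginal and to each slice separately. The paper cites the convexity of entropy with respect to general barycenters and is briefer about the measurability and finiteness caveats you raise, but the argument is the same.
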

\begin{proof}
	Recall that  from Proposition~\ref{prop:decomp} the layerwise-Wasserstein interpolation of $\Omega \in P(P(\R^d \times \R_{\ge 0}))$
  amounts to constructing the  probability measure $\eta$ whose vertical marginal $\eta^V$
	 is the Wasserstein barycenter of $\Omega^V$, and whose conditional probabilities $\tilde \eta_l$ 
	 are the Wasserstein barycenters of the $\tilde \Omega_l$. 
	  Since the entropy $S(\mu)$ 
	depends additively on  $\mu^V$ and the $\tilde \mu_l$,  Wasserstein 
	convexity
	 of the entropy (see \cite{KimPass17} for convexity with respect to general barycenters) yields the result.
\end{proof}

Many phenotypes of interest concern only the depth of the root, and not its horizontal distribution of mass (since, for instance, nutrient concentration in soil is largely determined by depth).  Therefore the following simple observation is relevant. 
\begin{proposition}
	Any Wasserstein convex function of the vertical marginal is layerwise  Wasserstein convex.
\end{proposition}
\begin{proof}
 This follows immediately from the structure of the layerwise-Wasserstein distance,  given in Proposition~\ref{prop:decomp}. 
\end{proof}
Let us list a few examples of functionals with possible biological applications, which are layerwise-Wasserstein convex by this proposition:
\begin{example}
\begin{itemize}
 \item 
 The vertical mean, $\mu \mapsto \bar y:=\int_{\mathbb{R}^d \times \mathbb{R}_{\geq 0 }}yd\mu(x,y) =\int_{\mathbb{R}_{\geq 0 }}yd\mu^V(y)$;  one can verify easily that this is in fact affine along displacement (and hence layerwise-Wasserstein) interpolations.

 \item  The vertical variance $\int_{\mathbb{R}^d\times\mathbb R_{\geq 0 }}|y-\bar y|^2 d\mu(x,y) =\int_{\mathbb R_{\geq 0 }}|y-\bar y|^2 d\mu^V(y)$, a measure the spread of the mass in the vertical direction \cite{KimPass15}. 
\item  The vertical internal energy $$\int_{\mathbb{R}_{\geq0}} (f^V(y))^r dy \quad \hbox{ for } r\geq 1.$$  

\item    Vertical quantiles $F^{-1}_\mu(l)$ for each fixed $l \in (0,1)$. For instance, the vertical median ($l=1/2$) is the depth above which half the mass of the root lies.  The 100th quantile (the maximal depth of the root) is often called the rooting depth, while the 87th quantile ($l =87/100$) is a conventional phenotype often used as a measure of the root depth.  The displacement convexity (and layerwise-Wasserstein  convexity) of these follows immediately from the monotone structure of one-dimensional optimal couplings  with respect to the distance squared cost; in fact, it is layerwise-Wasserstein  affine.
\end{itemize}

\end{example}
Although, unlike the examples above, it is not a functional of the vertical marginal, the structure of the layerwise-Wasserstein distance easily implies that the class of functionals in the following example below are layerwise-Wasserstein convex as well.
\begin{example}
	Any functional of the form
$$
\bar{\mathcal{F}}(\mu) = \int_0^1 \bar{\mathcal{F}}_l(\tilde \mu_l )dl.
$$
where each $\bar{\mathcal{F}}_l$ is  Wasserstein convex is layerwise-Wasserstein convex.
\end{example}

\bibliographystyle{plain}

\bibliography{bibliography}

\begin{thebibliography}{10}

\bibitem{AguehCarlier11}
M.~Agueh and G.~Carlier.
\newblock {Barycenters in the Wasserstein space}.
\newblock {\em SIAM J. Math. Anal.}, 43(2):904--924, 2011.

\bibitem{AnderesBogwardtMiller16}
Ethan Anderes, Steffen Borgwardt, and Jacob Miller.
\newblock Discrete wasserstein barycenters: optimal transport for discrete
  data.
\newblock {\em Mathematical Methods of Operations Research}, 84(2):389--409,
  Oct 2016.

\bibitem{BonneelRabinPeyrePfister2015}
Nicolas Bonneel, Julien Rabin, Gabriel Peyr{\'e}, and Hanspeter Pfister.
\newblock Sliced and radon wasserstein barycenters of measures.
\newblock {\em Journal of Mathematical Imaging and Vision}, 51(1):22--45, Jan
  2015.

\bibitem{BressanPalladinoSun2018}
Alberto Bressan, Michele Palladino, and Qing Sun.
\newblock Variational problems for tree roots and branches.
\newblock Preprint available at
  http://personal.psu.edu/axb62/PSPDF/vartree33.pdf.

\bibitem{BressanSun2018}
Alberto Bressan and Qing Sun.
\newblock On the optimal shape of tree roots and branches.
\newblock {\em Math. Models Methods Appl. Sci.}, 28(14):2763--2801, 2018.

\bibitem{Bucksch14}
Alexander Bucksch.
\newblock A practical introduction to skeletons for the plant sciences.
\newblock {\em Applications in Plant Sciences}, 2(8):1400005, 2014.

\bibitem{CarlierGalichonSantambrogio09}
G.~Carlier, A.~Galichon, and F.~Santambrogio.
\newblock From {K}nothe's transport to {B}renier's map and a continuation
  method for optimal transport.
\newblock {\em SIAM J. Math. Anal.}, 41(6):2554--2576, 2009/10.

\bibitem{bio1}
R.~T. Clark, R.~B. MacCurdy, J.~K. Jung, J.~E. Shaff, S.~R. McCouch, D.~J.
  Aneshansley, and L.~V. Kochian.
\newblock Three-dimensional root phenotyping with a novel imaging and software
  platform.
\newblock {\em Plant Physiol.}, 156, 2011.

\bibitem{bio4}
Randy~T. Clark, Adam~N. Famoso, Keyan Zhao, Jon~E. Shaff, Eric~J. Craft,
  Carlos~D. Bustamante, Susan~R. McCouch, Daniel~J. Aneshansley, and Leon~V.
  Kochian.
\newblock High-throughput two-dimensional root system phenotyping platform
  facilitates genetic analysis of root growth and development.
\newblock {\em Plant, Cell and Environment}, 2013.

\bibitem{bio2}
A.~N. Famoso, R.~T. Clark, J.~E. Shaff, E.~Craft, S.~R. McCouch, and Kochian~L.
  V.
\newblock Development of a novel aluminum tolerance phenotyping platform used
  for comparisons of cereal aluminum tolerance and investigations into rice
  aluminum tolerance mechanisms.
\newblock {\em Plant Physiol.}, 153:1678—1691, 2010.

\bibitem{bio3}
A.~N. Famoso, K.~Zhao, R.~T. Clark, C.-W. Tung, M.~H. Wright, C.~Bustamante,
  L.~V. Kochian, and S.~R. McCouch.
\newblock Genetic architecture of aluminum tolerance in rice (oryza sativa)
  determined through genome-wide association analysis and qtl mapping.
\newblock {\em PLoS Genetics}, 7:e1002221, 2011.

\bibitem{Hatcher2002}
Allen Hatcher.
\newblock {\em Algebraic topology}.
\newblock Cambridge University Press, Cambridge, 2002.

\bibitem{KimPass15}
Young-Heon Kim and Brendan Pass.
\newblock Nonpositive curvature, the variance functional, and the {W}asserstein
  barycenter.
\newblock Preprint available at https://arxiv.org/abs/1503.06460.

\bibitem{KimPass17}
Young-Heon Kim and Brendan Pass.
\newblock Wasserstein barycenters over {R}iemannian manifolds.
\newblock {\em Advances in Mathematics}, 307:640 -- 683, 2017.

\bibitem{Knothe57}
H.~Knothe.
\newblock Contributions to the theory of convex bodies.
\newblock {\em Michigan Math. J.}, 4:39--52, 1957.

\bibitem{McCann97}
R.J. McCann.
\newblock A convexity principle for interacting gases.
\newblock {\em Adv. Math.}, 128:153--179, 1997.

\bibitem{MuzellecCuturi19}
Boris Muzellec and Marco Cuturi.
\newblock Subspace detours: Building transport plans that are optimal on
  subspace projections.
\newblock {\em CoRR}, abs/1905.10099, 2019.

\bibitem{bio5}
MA~Piñeros, BG~Larson, JE~Shaff, DJ~Schneider, AX~Falcão, L~Yuan, RT~Clark,
  EJ~Craft, TW~Davis, Pradier PL, NM~Shaw, I~Assaranurak, SR~McCouch,
  C~Sturrock, M~Bennett, and L.V. Kochian.
\newblock Evolving technologies for growing, imaging and analyzing 3d root
  system architecture of crop plants.
\newblock {\em J Integr. Plant. Biol .}, 58:230--241, 2016.

\bibitem{Rosenblatt52}
Murray Rosenblatt.
\newblock Remarks on a multivariate transformation.
\newblock {\em Ann. Math. Statistics}, 23:470--472, 1952.

\bibitem{Santambrogio15}
Filippo Santambrogio.
\newblock {\em Optimal transport for applied mathematicians}.
\newblock Birkh\"auser/Springer, Cham, 2015.

\bibitem{SantambrogioWang16}
Filippo Santambrogio and Xu-Jia Wang.
\newblock Convexity of the support of the displacement interpolation:
  counterexamples.
\newblock {\em Appl. Math. Lett.}, 58:152--158, 2016.

\bibitem{Toroczkai01}
Zolt\'an Toroczkai.
\newblock Topological classification of binary trees using the horton-strahler
  index.
\newblock {\em Phys. Rev. E}, 65:016130, Dec 2001.

\bibitem{Villani03}
C.~Villani.
\newblock {\em Topics in Optimal Transportation}, volume~58 of {\em Graduate
  Studies in Mathematics}.
\newblock American Mathematical Society, Providence, 2003.

\bibitem{Villani09}
C.~Villani.
\newblock {\em Optimal Transport. Old and New}, volume 338 of {\em Grundlehren
  der Mathematischen Wissenschaften [Fundamental Principles of Mathematical
  Sciences]}.
\newblock Springer, New York, 2009.

\end{thebibliography}

\end{document}